\newtheorem{thm}{Theorem}[section]
 \newtheorem{cor}[thm]{Corollary}
 \newtheorem{lem}[thm]{Lemma}
 \newtheorem{prop}[thm]{Proposition}
 \theoremstyle{definition}
 \theoremstyle{remark}
 \newtheorem{rem}[thm]{Remark}
 \numberwithin{equation}{section}
\def\be#1 {\begin{equation} \label{#1}}
\newcommand{\ee}{\end{equation}}
\renewcommand{\phi}{\varphi}
\def\C{\mathbb C}
\def\R{\mathbb R}
\def\N{\mathbb N}
\def\H{\mathcal H}
\def\W{\mathcal W}
\def\M{\mathcal M}
\def\e{e}
\def\eps{\epsilon}
\def\dis{\displaystyle}    
 \newcommand{\s}{  \sigma }
 \newcommand{\cal}{  \mathcal   }
  \renewcommand{\Re}{  {\mathfrak{Re}}  }
    \renewcommand{\Im}{   {\mathfrak{Im}} }
\newcommand{\ov}{  \overline  }
\renewcommand\>{\rangle}
\definecolor{gr}{rgb}   {0.,   0.69,   0.23 }
\definecolor{NLS}{rgb}   {0.,   0.5,   1. }
\definecolor{mg}{rgb}   {0.85,  0.,    0.85}
\definecolor{yl}{rgb}   {0.8,  0.7,   0.}
\definecolor{or}{rgb}  {0.7,0.2,0.2}
\begin{document}

\thanks{T. Chambrion is  supported by the grant "QUACO" ANR-17-CE40-0007-01.}
\thanks{L. Thomann is supported by the grants "BEKAM"  ANR-15-CE40-0001 and "ISDEEC'' ANR-16-CE40-0013.}

\author{ Thomas Chambrion }
\author{ Laurent Thomann }
\address{Thomas Chambrion, \textsc{Universit\'e de Lorraine, CNRS, INRIA, IECL, F-54000 Nancy, France}}
\email{thomas.chambrion@univ-lorraine.fr}
 \address{Laurent Thomann, \textsc{Universit\'e de Lorraine, CNRS, IECL, F-54000 Nancy, France}}
\email{laurent.thomann@univ-lorraine.fr}

\title{On the bilinear control of the Gross-Pitaevskii equation}

\subjclass[2000]{35Q93 ; 35L05}

\keywords{Control theory, bilinear control, obstructions, non-linear Schr\"odinger equation}

\begin{abstract}
In this paper we study the bilinear-control problem for the linear and non-linear Schr\"odinger equation with harmonic potential. By the means of different examples, we show how space-time smoothing effects (Strichartz estimates, Kato smoothing effect) enjoyed by the linear flow, can help to prove obstructions to controllability.
\end{abstract}

\maketitle

\section{Introduction and results} 

\subsection{Introduction}

 In this paper, for $d\geq 1$, we consider the  bi-linear control problem for the quantum harmonic oscillator
   \begin{equation} \label{NLS0}
  \left\{
      \begin{aligned}
      & i \partial_t \psi +H \psi =u(t) K(x) \psi- \sigma|\psi|^2 \psi, \qquad (t,x)\in \R \times \R^d,
       \\  & \psi(0,x)=\psi_0(x),
      \end{aligned}
    \right.
\end{equation}
 where 
    \begin{equation*}
 H=-\Delta+|x|^2=\sum_{j=1}^d \big(-\frac{\partial^2}{\partial x^2_j}+x^2_j\,\big)
 \end{equation*}
  is the harmonic oscillator, $K : \R^d \longrightarrow \R$ is a given real valued potential 
  and where the control~$u$ belongs to $L^r_{loc}(\R ; \R)$ for some $r \geq 1$.  In the 
  sequel, we will either study the case $\sigma=0$ and we will refer to this 
  equation as the  bi-linear Schr\"odinger equation, or the case $\sigma=1$ 
  (respectively $\sigma=-1$) which corresponds to the non-linear  Schr\"odinger 
  equation with a cubic   defocusing  (respectively focusing) non-linearity.
We call the linear operator $\psi \mapsto  K\psi$  the \emph{control operator}, while the (possibly non-linear) map $\psi \longmapsto i H \psi + i \sigma |\psi|^2\psi$ is usually called the \emph{drift}.   
  
   \medskip

For a given source 
$\psi_0$, the \emph{attainable set} from $\psi_0$ with controls in  
$L^r_{loc}(\R;\R)$ is the set of  $\psi_f$ for which there exist a time $T\geq 0$ 
and  a control $u$ in $L^r([0,T];\R)$ such that the solution $\psi$ 
of \eqref{NLS0} at time $T$ satisfies $\psi(T,\cdot)=\psi_f(\cdot)$. A system is \emph{controllable} in a given space $X$ if the attainable set from any point of $X$ contains $X$.  

A celebrated result \cite[Theorem 3.6]{BMS}  (see also \cite{Turinici} for the case of
the Schrdinger equation and~\cite{BCC2} for a generalization to the case of
$L^1$ controls) states         that for bi-linear equations  posed in a Banach space with 
linear drift and bounded control operator, the attainable set (from any source) with $L^r_{loc}(\R,\R)$ 
controls, $r>1$, is  contained in a countable union of compact sets.  
In an infinite dimensional Banach space, a countable union of compact sets 
is meager in Baire sense. Hence,  this result represents a deep topological obstruction 
to controllability of bi-linear control systems. Notice that this negative result does not prohibit controllability in smaller spaces, endowed with stronger norms, where the control operator is not continuous anymore. 
 
 Energy estimates have provided various obstructions to controllability of conservative equations
via a bilinear term, see \cite{BCC} for bilinear
Schrdinger  with possibly unbounded control operators
 and~\cite{ChTh1} for non-linear wave equations with
$L^1_{loc}$ controls and bounded control operators.
\medskip

Concerning the study of the well-posedness of Schr\"odinger equations with potentials, we refer to~\cite{Fujiwara, NaSte, Carles}. 

For (local) exact controllability results for NLS on a finite length interval we refer to~\cite{Beauchard1, Beauchard-Laurent,BLT, Beauchard-Laurent2}. For both the case of the bi-linear and non-linear Schr\"odinger equations, to get positive exact controllability results, the main difficulty is the choice of the ambient space. This space has to be chosen such that the equation is well-posed {\it and} the control operator is not bounded.   In~\cite{Beauchard1, Beauchard-Laurent,BLT} the fact that the control operator is not continuous is a consequence that  the Schr\"odinger equation is studied on a {\it finite length} interval with well chosen boundary conditions. Here instead, we study the equation on $\R^d$ and therefore take advantage of dispersive effects.  

For approximate controllability results  for the bi-linear Schr\"odinger equation see \cite{CMSB, MS}.

On the other hand, in the particular case  $K(x)=x$ (which does not fall in the scope of our analysis), with an explicit change of variable, one can show that the attainable set is  a finite dimensional  manifold  \cite{MR}. Notice that this result also holds for the non-linear equation,  see~\cite[Section 2.3]{ILT}.  In \cite{Nersesyan}, the authors obtained non-controllability results for the bi-linear Schr\"odinger equation on domains.

 In this note, we concentrate on control terms taking the form $u(t) K(x) \psi$, where $K$ is a  potential given once for all, and $t\mapsto u(t)$ takes real values. The extension of the results we give here to control terms with the more general form  $u(t,x) \psi$, see for instance \cite{puel}, is  beyond the scope of this work.  

 We refer to  \cite{Sarychev} for negative controllability results for non-linear  Schr\"odinger equations with additive controls. Another approach, based on Kolmogorov $\epsilon$-entropy, has been used in \cite{Shirikyan} to obtain comparable non-controllability results for the Euler equation with an additive forcing term.

 We refer to the introduction of \cite{Beauchard-Laurent} for more references on control problems and concerning results on the optimal control problem of the non-linear Schr\"odinger equation, see~\cite{HMMS}  and   \cite{Feng-Zhao, Feng-Zhao2}.

For an overview of results concerning the control of \eqref{NLS0}, see \cite{ILT}.  For an overview of controllability results of bi-linear control systems, we refer to \cite{Khapalov}.\medskip

 In the sequel, we will need the   harmonic Sobolev spaces, in other words, the Sobolev spaces based on the domain of the harmonic oscillator. For $s\geq 0$, $p\geq 1$ we define 
 \begin{equation*} 
         \W^{s, p}= \W^{s, p}(\R^d) = \big\{ f\in L^p(\R^d),\; {H}^{s/2}f\in L^p(\R^d)\big\},     
       \end{equation*}
       \begin{equation*}
           \H^{s}=   {\cal H}^{s}(\R^d) = \W^{s, 2}.
       \end{equation*}
             The natural norms are denoted by $\Vert f\Vert_{\W^{s,p}}$ and up to equivalence of norms  (see {\it e.g.} \cite[Lemma~2.4]{YajimaZhang2}), for $1<p<+\infty$,  we have
          \begin{equation}\label{equiv}
      \Vert f\Vert_{\W^{s,p}} = \Vert  H^{s/2}f\Vert_{L^{p}} \equiv \Vert (-\Delta)^{s/2} f\Vert_{L^{p}} + 
       \Vert\<x\>^{s}f\Vert_{L^{p}},
 \end{equation}
 with the notation $\<x\>=(1+|x|^{2})^{1/2}$.

\subsection{A smoothing property for the bi-linear equation}\label{sect1.2}
Consider the equation
  \begin{equation}\label{BL22}
  \left\{
      \begin{aligned}
      & i \partial_t \psi +H \psi =u(t) K(x) \psi, \qquad (t,x)\in \R \times \R^d,
       \\  & \psi(0,x)=\psi_0(x) \in \H^k(\R^d),
      \end{aligned}
    \right.
\end{equation}
in any dimension $d\geq 1$ and regularity $k \geq 0$.  Assume that $K \in \W^{k, \infty}(\R^d)$. Then for all integer $k \geq 0$, the control operator
 \begin{equation}\label{map}
 \begin{array}{rcl}
\H^k(\R^d)&\longrightarrow&\H^k(\R^d)\\[3pt]
\dis  \psi&\longmapsto &  K\psi,
 \end{array}
 \end{equation}
 is continuous (see \eqref{conti} for the proof), and therefore the general result of Ball-Marsden-Slemrod~\cite[Theorem 3.6]{BMS} applies to \eqref{BL22}. This result shows that, for fixed initial condition $ \psi_0 \in \H^k(\R^d)$, the attainable set of \eqref{BL22}
$$
\bigcup_{t  \in \R} \;\bigcup_{\substack{ u \in L_{loc}^r{(\R)}, \\  r>1}} \big\{\psi(t)\big\},
$$
is  a countable union of compact subsets of $\H^k(\R^d)$. 
 
  Our next results (Theorem~\ref{thmKnb} and Corollary~\ref{cor-compact}) give a more precise description of the attainable set of \eqref{BL22}, under the assumption $u\in L^{2}_{loc}(\R)$.

     \begin{thm}\label{thmKnb} Let  $d \geq 1$ and  $k\geq 0$ be an even integer. Let $u\in L^{2}_{loc}(\R;\R)$ and  ${K \in \W^{k+1,\infty}(\R^d,\R)}$.
 Let $\psi_0 \in \H^k(\R^d)$, then the  equation \eqref{BL22} admits a unique global solution   ${\psi \in \mathcal{C}(\R ; \H^k(\R^d))}$.  \medskip
 
 Moreover    for all $\beta <1/2$, there exists $\alpha >0$ such that
\begin{equation}\label{solu*} 
\psi(t) - \e^{itH} \psi_0  \in       \mathcal{C}^{\alpha}\big(\R; \H^{k+\beta}(\R^{d})\big),
\end{equation}
and  for all $T>0$, 
\begin{equation}\label{borne*} 
\|\psi(t) - \e^{itH} \psi_0 \|_{\mathcal{C}^{\alpha}([-T,T] ; \H^{k+\beta}(\R^d))} \leq C(T,k, \| \psi_0 \|_{ \H^k(\R^d)} ,  \|u\|_{L^2([-T,T])} ) .
\end{equation}
 \end{thm}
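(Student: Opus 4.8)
The plan is to work from the Duhamel (mild) formulation
\[
\psi(t)=\e^{itH}\psi_0 - i\int_0^t \e^{i(t-s)H}\,u(s)K\psi(s)\,ds,
\]
so that the object to be controlled is the Duhamel term $v(t):=\psi(t)-\e^{itH}\psi_0$. First I would settle global well-posedness in $\H^k$. Since $u\in L^2_{loc}$ and, by the continuity of the control operator \eqref{map}, $\|K\psi\|_{\H^k}\lesssim\|\psi\|_{\H^k}$, the map $\psi\mapsto \e^{itH}\psi_0-i\int_0^t\e^{i(t-s)H}uK\psi\,ds$ is a contraction on $\mathcal{C}([-T_0,T_0];\H^k)$ for $T_0$ small, because $\|\int_0^t\e^{i(t-s)H}uK\psi\,ds\|_{\H^k}\le\int_0^t|u(s)|\,\|K\psi(s)\|_{\H^k}\,ds\le \sqrt{T_0}\,\|u\|_{L^2}\sup_{|s|\le T_0}\|\psi(s)\|_{\H^k}$ by Cauchy--Schwarz; this yields local existence and uniqueness. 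Globalization follows from the same inequality and Gronwall's lemma, $\|\psi(t)\|_{\H^k}\le\|\psi_0\|_{\H^k}\exp\big(C\int_0^t|u(s)|\,ds\big)\le \|\psi_0\|_{\H^k}\exp\big(C\sqrt{t}\,\|u\|_{L^2([0,t])}\big)$, which is finite for every $t$. This part uses only $K\in\W^{k,\infty}$.

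The heart of the matter, and the step I expect to be the main obstacle, is a smoothing estimate for the Duhamel operator asserting a gain of almost half a derivative: for every $\beta_1<1/2$ and $T>0$,
\[
\Big\|\int_0^t \e^{i(t-s)H}G(s)\,ds\Big\|_{L^\infty([-T,T];\H^{k+\beta_1})}\le C(T,k,\beta_1)\,\|G\|_{L^2([-T,T];\H^k)}.
\]
Applied with $G=-iuK\psi\in L^2([-T,T];\H^k)$ (of norm $\lesssim\|u\|_{L^2}\sup_{[-T,T]}\|\psi\|_{\H^k}$), this places $v$ in $L^\infty_t\H^{k+\beta_1}$. Such a gain cannot come from a mode-by-mode argument — with $G$ merely $L^2$ in time there is no decay in the spectral variable — it is a genuine space-time phenomenon, to be derived from the local (Kato) smoothing effect of the harmonic flow $\e^{itH}$, itself transferable from the local smoothing of the free propagator through the Mehler/lens transform. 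The confining potential is what converts the local-in-space weights of the free estimate into the global weighted norm encoded, via \eqref{equiv}, in $\H^{k+\beta_1}$. This is also where $K\in\W^{k+1,\infty}$ (rather than $\W^{k,\infty}$) is needed: to propagate the estimate to regularity level $k$ one must commute the fractional operator $H^{(k+\beta_1)/2}$, equivalently distribute up to $\lceil k+\beta_1\rceil=k+1$ derivatives, across the product $K\psi$, and the corresponding commutator and multiplier bounds require one additional bounded derivative of $K$.

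Granting the smoothing estimate, the Hölder regularity \eqref{solu*} follows by interpolation. Writing, for $t'\le t$, $v(t)=\e^{i(t-t')H}v(t')-i\int_{t'}^t\e^{i(t-s)H}uK\psi\,ds$, I first bound the increment in the \emph{weaker} norm $\H^k$: the elementary bound $\|(\e^{i\tau H}-1)f\|_{\H^k}\lesssim|\tau|^{\beta_1/2}\|f\|_{\H^{k+\beta_1}}$ (from $|\e^{i\tau\lambda}-1|\le 2^{1-\beta_1/2}(|\tau|\lambda)^{\beta_1/2}$ on the spectrum) controls the first term by $|t-t'|^{\beta_1/2}\|v\|_{L^\infty\H^{k+\beta_1}}$, while Cauchy--Schwarz controls the second by $|t-t'|^{1/2}\|u\|_{L^2}\sup\|K\psi\|_{\H^k}$; hence $v\in\mathcal{C}^{\beta_1/2}([-T,T];\H^k)$. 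Combining this with $v\in L^\infty([-T,T];\H^{k+\beta_1})$ through the interpolation inequality $\|f\|_{\H^{k+\beta}}\lesssim\|f\|_{\H^k}^{1-\theta}\|f\|_{\H^{k+\beta_1}}^{\theta}$ with $\theta=\beta/\beta_1$ gives, for any $\beta<\beta_1<1/2$,
\[
\|v(t)-v(t')\|_{\H^{k+\beta}}\lesssim |t-t'|^{(\beta_1/2)(1-\theta)}=|t-t'|^{(\beta_1-\beta)/2},
\]
that is \eqref{solu*} with $\alpha=(\beta_1-\beta)/2>0$. Finally, tracking the constants through each inequality — the Gronwall factor $\exp(C\sqrt{T}\,\|u\|_{L^2([-T,T])})$, the smoothing constant $C(T,k,\beta_1)$, and the interpolation — produces a bound of the form \eqref{borne*} depending only on $T$, $k$, $\|\psi_0\|_{\H^k}$ and $\|u\|_{L^2([-T,T])}$.
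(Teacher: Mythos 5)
Your local/global well-posedness step and your final H\"older-interpolation step are both sound (the latter is in fact a little more direct than the paper's argument, which instead bounds $\partial_t\psi$ in $L^2_T\H^{k-2}$ and invokes an abstract interpolation lemma). The problem is the central estimate you lean on, the unweighted inhomogeneous smoothing bound
\[
\Big\|\int_0^t \e^{i(t-s)H}G(s)\,ds\Big\|_{L^\infty([-T,T];\H^{k+\beta_1})}\le C\,\|G\|_{L^2([-T,T];\H^k)}.
\]
This is false. Take $G(s)=\e^{isH}\phi$ with $\phi\in\H^k\setminus\H^{k+\beta_1}$: then $\int_0^t\e^{i(t-s)H}G(s)\,ds=t\,\e^{itH}\phi\notin\H^{k+\beta_1}$ for $t\neq0$, while $\|G\|_{L^2_T\H^k}=T^{1/2}\|\phi\|_{\H^k}<\infty$. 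The Kato smoothing effect \eqref{smoothing} gains half a derivative only at the price of the spatial weight $\<x\>^{-1/2}$, and your heuristic that the confining potential ``converts the local-in-space weights of the free estimate into the global norm'' is precisely the wrong intuition: \eqref{smoothing} for $\e^{itH}$ still carries the weight, and no unweighted gain is possible (the counterexample above already lives in the harmonic setting). Consequently your application with $G=-iuK\psi$, which uses only $\|K\|_{\W^{k,\infty}}$, cannot work: if it did, any bounded $K$ would produce smoothing. The mechanism that actually produces the gain is the decay $|K|\le C\<x\>^{-k-1}$ encoded in $K\in\W^{k+1,\infty}$ via \eqref{equiv}, which absorbs the weight $\<x\>^{1/2}$; you misattribute the need for $\W^{k+1,\infty}$ entirely to commutator and Leibniz bookkeeping.

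The paper's route avoids any dual or inhomogeneous smoothing estimate: it substitutes $\psi=\e^{itH}\psi_0+\psi_1$ inside the Duhamel integral, bounds the source term $\|K\e^{isH}\psi_0\|_{L^2_T\H^{k+\beta}}$ directly by distributing $H^{k/2}$ with the Leibniz rule, using a commutator estimate for $H^{\beta/2}$, and applying \eqref{smoothing} to $\e^{isH}(H^{k/2}\psi_0)$ with the weight absorbed by the decay of $K$; the remaining term is closed via $\|K\psi_1\|_{L^2_T\H^{k+\beta}}\le CT^{1/2}\|\psi_1\|_{L^\infty_T\H^{k+\beta}}$ and a bootstrap on small time intervals. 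If you want to keep your architecture, you must replace your lemma by its correct weighted version, roughly $\|\int_0^t\e^{i(t-s)H}G\,ds\|_{L^\infty_T\H^{k+\beta}}\lesssim\|\<x\>^{1/2}G\|_{L^2_T\H^{k}}$ (the dual of \eqref{smoothing}, up to commutators), and then check that $\<x\>^{1/2}K$ is an $\H^k$-multiplier using $K\in\W^{k+1,\infty}$. As written, the key lemma is both unproved and untrue, so the proof has a genuine gap.
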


The proof of \eqref{borne*} relies on the Kato smoothing effect for the linear Schr\"odinger equation. It can be stated like this:  for all $\beta <1/2$ there exists $C>0$ such that for all $\phi \in L^2(\R^d)$
    \begin{equation}\label{smoothing}
 \big \| \frac1{\<x\>^{\frac12}} H^{\frac{\beta}2} e^{itH} \phi \big\|_{L^2([-2\pi,2\pi] \times \R^d)} \leq C \|\phi \|_{L^2( \R^d)}.
\end{equation}
We refer to \cite[Th\'eor\`eme 15]{poiret1} for the proof of  \eqref{smoothing}. This inequality shows that the solution of the linear Schr\"odinger  flow enjoys a gain of 1/2 derivative locally in space. \medskip

It is likely that the statement of Theorem~\ref{thmKnb} holds for any $k\in \N$, but at the price of more technicalities, therefore  in this paper we only consider the case $k\in 2\N$, which allows to work with differential operators instead of pseudo-differential operators.\medskip

The result also holds for perturbations  of $H$, namely, when $H$ is replaced with $H+W$, where~$W$ is in the Schwartz class $\mathcal{S}(\R^d; \R)$. In the  argument one has to replace $uK$ with $uK-W$. \medskip

The smoothing property stated in Theorem~\ref{thmKnb} leads to the following obstruction to controllability of equation \eqref{BL22}.

\begin{cor}\label{cor-compact} Under the assumptions of Theorem~\ref{thmKnb}, for all $\beta <1/2$, $T>0$, and $M>0$, the set 
\begin{equation*}
 \bigcup_{\substack{   t \in [-T,T] \\  \|u\|_{L^2([-T,T]; \R)} \leq M  } } \big\{ \psi(t) -e^{it H}\psi_0  \big\}
\end{equation*}
is a compact of $ \H^{k+\beta}(\R^{d})$. As a consequence,  the   set 
$$
\bigcup_{t \in \R} \;\bigcup_{ u \in L^2_{loc}(\R)}\big\{ \psi(t) -e^{it H}\psi_0\big\}
$$
is a countable union of compact subsets of $\H^{k+\beta}(\R^d)$.
\end{cor}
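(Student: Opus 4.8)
The plan is to prove that the first set is compact and then to deduce the countable-union statement by exhausting $\R$ and the admissible control norms by integers. Fix $\beta<1/2$, $T>0$, $M>0$, set $B_M=\{u\in L^2([-T,T];\R):\|u\|_{L^2}\le M\}$, and for a control $u$ write $w_u(t)=\psi(t)-e^{itH}\psi_0$, where $\psi$ solves \eqref{BL22} with this $u$; denote by $S$ the displayed set. Since $\H^{k+\beta}(\R^d)$ is a metric space, it suffices to show that $S$ is sequentially compact, that is, that any sequence $v_n=w_{u_n}(t_n)$ with $t_n\in[-T,T]$ and $u_n\in B_M$ has a subsequence converging in $\H^{k+\beta}$ to an element of $S$.

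First I would extract convergent parameters: since $[-T,T]$ is compact and $B_M$ is weakly sequentially compact in the Hilbert space $L^2([-T,T])$, after passing to a subsequence we may assume $t_n\to t_*\in[-T,T]$ and $u_n\rightharpoonup u_*$ weakly, with $\|u_*\|_{L^2}\le M$. The crucial compactness input comes from Theorem~\ref{thmKnb}: choosing $\beta<\beta'<1/2$, the bound \eqref{borne*} shows that $(w_{u_n})_n$ is bounded in $\mathcal{C}^\alpha([-T,T];\H^{k+\beta'})$, uniformly in $u_n\in B_M$. Because $H$ has compact resolvent, the embedding $\H^{k+\beta'}\hookrightarrow\H^{k+\beta}$ is compact; together with the uniform H\"older-in-time bound this yields, through the Arzel\`a--Ascoli theorem, a further subsequence along which $w_{u_n}\to w_*$ in $\mathcal{C}([-T,T];\H^{k+\beta})$ for some limit $w_*$.

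The main step, and where I expect the real work to lie, is to identify $w_*$ with $w_{u_*}$. Writing $\psi_n=e^{itH}\psi_0+w_{u_n}$ and $\psi_*=e^{itH}\psi_0+w_*$, the convergence above gives $\psi_n\to\psi_*$ in $\mathcal{C}([-T,T];\H^k)$, and I would pass to the limit in the Duhamel formula
\begin{equation*}
\psi_n(t)=e^{itH}\psi_0-i\int_0^t e^{i(t-s)H}\,u_n(s)\,K\psi_n(s)\,ds.
\end{equation*}
The integrand splits as $u_n\,K(\psi_n-\psi_*)+(u_n-u_*)\,K\psi_*$. Since $e^{i(t-s)H}$ is unitary on $\H^k$ and $K$ acts boundedly there by \eqref{map}, the first contribution is bounded in $\H^k$ by $C(T,M)\,\|\psi_n-\psi_*\|_{\mathcal{C}([-T,T];\H^k)}\to0$; the second tends to $0$ weakly in $\H^k$, because $s\mapsto e^{i(t-s)H}K\psi_*(s)$ is a fixed element of $L^2([-T,T];\H^k)$ and $u_n-u_*\rightharpoonup0$. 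By uniqueness of weak limits, $\psi_*$ satisfies the Duhamel formula with control $u_*$, hence solves \eqref{BL22}, and the uniqueness statement of Theorem~\ref{thmKnb} forces $\psi_*=\psi_{u_*}$, so $w_*=w_{u_*}$. Consequently $v_n=w_{u_n}(t_n)\to w_*(t_*)=w_{u_*}(t_*)\in S$, which proves that $S$ is compact.

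Finally, for the last assertion I would write $\R=\bigcup_{N\in\N}[-N,N]$ and note that for $t\in[-N,N]$ the value $\psi(t)$ depends only on the restriction $u|_{[-N,N]}$, whose $L^2$ norm is finite; choosing an integer $M\ge\|u\|_{L^2([-N,N])}$ places $\psi(t)-e^{itH}\psi_0$ in the compact set associated with $(N,M)$. The full set is therefore the union over $(N,M)\in\N^2$ of these compacts, hence a countable union of compact subsets of $\H^{k+\beta}(\R^d)$. The principal obstacle throughout is the limit identification: one must balance the merely weak convergence of the controls against the strong, compact convergence of the states provided by the smoothing estimate, and the passage to a strictly higher regularity $\beta'>\beta$ is essential in order to obtain a genuinely compact embedding rather than only a uniform bound.
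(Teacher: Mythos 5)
Your proof is correct, and its core compactness mechanism is exactly the paper's: the uniform bound \eqref{borne*} at a regularity $\beta'\in(\beta,1/2)$ gives a uniform $\mathcal{C}^{\alpha}([-T,T];\H^{k+\beta'})$ bound, and the compact embedding into $\mathcal{C}([-T,T];\H^{k+\beta})$ (Arzel\`a--Ascoli plus the compact embedding $\H^{k+\beta'}\hookrightarrow\H^{k+\beta}$) yields a uniformly convergent subsequence; the diagonal estimate $\|\Psi_n(t_n)-\Psi(t_*)\|\le\sup_\tau\|\Psi_n(\tau)-\Psi(\tau)\|+\|\Psi(t_n)-\Psi(t_*)\|$ then concludes. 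Where you go further is the identification step: the paper's own proof of the corollary stops at extracting the limit $\Psi$ and never checks that $\Psi(t_*)$ actually belongs to the displayed set, so as written it only establishes relative compactness. Your argument --- weak sequential compactness of the ball $B_M$, passage to the limit in the Duhamel formula by splitting the integrand into $u_nK(\psi_n-\psi_*)$ (controlled by strong convergence of the states and boundedness of $K$ on $\H^k$) and $(u_n-u_*)K\psi_*$ (which vanishes weakly since $s\mapsto e^{i(t-s)H}K\psi_*(s)$ is a fixed $L^2$-in-time function), followed by uniqueness --- closes this gap and shows the set is genuinely closed, hence compact. This is in fact the same limit-identification scheme the authors deploy in their proofs of Theorems~\ref{thm1.1}, \ref{thm1.2} and \ref{BMS_NLS}, so your proof is a natural and slightly more complete hybrid of the two arguments already present in the paper; your treatment of the countable-union statement via the exhaustion over $(N,M)\in\N^2$ matches the paper's as well.
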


\begin{rem}
  With similar techniques, we can handle   the Klein-Gordon  equation (even in the non-linear case) 
 \begin{equation}\label{kg}
  \left\{
      \begin{aligned}
      & \partial_t^2\psi-\Delta\psi+m \psi=u(t)B(x) \psi-\psi^3, \quad (t,x) \in \R\times \mathcal{M}, \\
        & \psi(0,.)=\psi_0 \in H^1(\mathcal{M}),   \\
           &  \partial_t\psi(0,.)=\psi_1 \in L^2(\mathcal{M}), 
      \end{aligned}
    \right.
\end{equation}
 where $\M$ is a boundaryless compact manifold of dimension $1$ or $2$,  with $m \geq 0$ and where the potential~$B$ is assumed to be regular enough.  In this case, the result of \cite{BMS} applies, but one can additionally prove a gain of regularity, similar to Theorem \ref{thmKnb}.  Actually,  the mild solution to~\eqref{kg} reads
  \begin{equation*}
\psi(t)=S_0(t)\psi_0+S_1(t) \psi_1+\int_0^tS_1(t-s) \big(u(s)B(x) \psi(s)-\psi^3(s)\big)ds
 \end{equation*}
 where 
   \begin{equation*} 
 S_0(t)=\cos(t \sqrt{-\Delta+m} ) \;\;\text{ and }\;\; S_1(t)=\frac{\sin(t \sqrt{-\Delta+m} )}{ \sqrt{-\Delta+m}}. 
 \end{equation*}
In this context, the smoothing is realised by the gain of derivative induced by $S_1$. For non-controllability results for~\eqref{kg},  with $L^1$ controls,  we refer to \cite[Section 3]{ChTh1}.  Finally, notice that Beauchard \cite{Beauchard11} has proven a positive controllability result for the 1D bilinear wave equation with Neumann boundary conditions (this corresponds to potential with a jump after symmetrization).

\end{rem}

\subsection{Strichartz estimates and obstructions to the controllability of the non-linear equation}

The Strichartz  estimates are crucial tools in the study of the well-posedness of non-linear Schr\"odinger equation at low regularity. Let us recall them: a  couple $(q,r)\in [2,+\infty]^2$ is called admissible if 
\begin{equation*}
\frac2q+\frac{d}{r}=\frac{d}2\quad \text{and}\quad (d,q,r)\neq (2,2,+\infty).
\end{equation*}
  Then, if $(q,r)$ is an admissible couple,  for all $T>0$ there exists $C_{T}>0$ so that for all $\psi_{0}\in \H^{s}(\R^{d})$ we have  
\begin{equation}\label{Stri}
\|\e^{itH}\psi_{0}\|_{L^{q} ([-T,T],{\W}^{s,r} (\R^d)) }\leq C_{T}\|\psi_{0}\|_{\H^{s}(\R^{d}).}
\end{equation}
We will also need the inhomogeneous version of the Strichartz inequalities:  for all $ T>0 $, there exists $C_{T}>0$ so that for any  admissible couples   $ ( q_1, r_1 ) $ and     $ ( q_2, r_2 ) $ and function  $ F \in L^{q_2'}( [T,T]; \W^{s,r_2'} (\R^d)) $,
\begin{equation}\label{Stri1}
 \big\|  \int _0^t \e^{i(t-\tau)H} F(\tau) d\tau   \big\|_{L^{q_1} ([-T,T],{\W}^{s,r_1} (\R^d)) } \leq C_{T} \| F \|_{  L^{q_2'} ([-T,T],{\W}^{s,r_2'} (\R^d)) },
\end{equation}
where $ q_2' $ and $ r_2'$ are the H\"older conjugates of $ q_2 $ and $ r_2 $. We refer to \cite[Proposition 10]{poiret2} for a proof.

\subsubsection{The linear and non-linear Schr\"odinger equation in dimension $d=1$} To begin with, we consider the bi-linear Schr\"odinger equation 
   \begin{equation} \label{bilin}
  \left\{
      \begin{aligned}
      & i \partial_t \psi +H \psi =u(t) K(x) \psi , \qquad (t,x)\in \R \times \R,
       \\  & \psi(0,x)=\psi_0(x) \in \H^s(\R),
      \end{aligned}
    \right.
\end{equation}
where $K \in \H^s(\R;\R)$, for some $s\geq 0$. Then we are able to prove

\begin{thm}\label{thm1.1}
\begin{enumerate}[$(i)$]
\item Let   $K \in L^2(\R; \R)$,  $u\in L^{2}_{loc}(\R;\R)$,  and $\psi_0 \in L^2(\R;\C)$. There exists a unique global solution to  equation \eqref{bilin} in the class
 \begin{equation*} 
\psi \in   \mathcal{C}\big(\R; L^2(\R)\big) \cap  L_{loc}^4\big(\R; L^{\infty}(\R)\big). 
 \end{equation*} 
 
  This solution satisfies  
 $$\|\psi(t)\|_{L^2(\R)} =\|\psi_0\|_{L^2(\R)},\quad \forall \,t \in \R,$$
 and for all $T>0$
   \begin{equation}\label{norminf}
 \|  \psi \|_{L^4( [-T,T]; L^{\infty}(\R))}  \leq  C\big(  T,  \|\psi_0\|_{L^2(\R)} ,   \|u\|_{L^2([-T,T])}\big).
   \end{equation}  
 
Moreover, the attainable set 
$$
\bigcup_{t \in \R} \;\bigcup_{ u \in L^2_{loc}(\R;\R)} \big\{ \psi(t)\big\}
$$
is   a countable union of compact subsets of $L^2(\R)$. \medskip
\item More generally, let $s \geq 0$, $K \in \H^s(\R;\R)$,  $u\in L^{2}_{loc}(\R;\R)$ and $\psi_0 \in \H^s(\R;\C)$. Then there exists a unique global solution to  equation \eqref{bilin} in the class
 \begin{equation*} 
\psi \in   \mathcal{C}\big(\R; \H^s(\R)\big) \cap  L_{loc}^4\big(\R; \W^{s,\infty}(\R)\big).
 \end{equation*} 
  This solution satisfies  
 $$\|\psi(t)\|_{L^2(\R)} =\|\psi_0\|_{L^2(\R)},\quad \forall \,t \in \R,$$
 and for all $T>0$
      \begin{equation*} 
 \|  \psi \|_{L^{\infty}( [-T,T]; \H^{s}(\R))} +  \|  \psi \|_{L^4( [-T,T]; \W^{s,\infty}(\R))}  \leq  C\big(  T,  \|\psi_0\|_{\H^s(\R)} ,   \|u\|_{L^2([-T,T])}\big).
   \end{equation*}  ~
 
Moreover, the attainable set 
and  the attainable set 
$$
\bigcup_{t \in \R} \;\bigcup_{ u \in L^2_{loc}(\R;\R)} \big\{ \psi(t)\big\}
$$
is   a countable union of compact subsets of $\H^s(\R)$.
\end{enumerate}
\end{thm}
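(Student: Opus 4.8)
The plan is to treat both parts by a fixed-point argument in a Strichartz space and then to read off the compactness of the attainable set from a weak-to-strong continuity property of the control-to-state map. In dimension $d=1$ the couples $(q,r)=(4,\infty)$ and $(\infty,2)$ are admissible, the latter being dual to $(1,2)$, so I would work in $X_T=\mathcal{C}([-T,T];L^2)\cap L^4([-T,T];L^\infty)$ for part $(i)$ (resp. $X^s_T=\mathcal{C}([-T,T];\H^s)\cap L^4([-T,T];\W^{s,\infty})$ for part $(ii)$). Writing the Duhamel formula $\psi(t)=e^{itH}\psi_0-i\int_0^t e^{i(t-\tau)H}u(\tau)K\psi(\tau)\,d\tau$, the homogeneous term is controlled by \eqref{Stri} and the integral term by the inhomogeneous estimate \eqref{Stri1} with $(q_2,r_2)=(\infty,2)$, since by H\"older $\|uK\psi\|_{L^1([-T,T];L^2)}\le \|u\|_{L^2([-T,T])}\|K\|_{L^2}\|\psi\|_{L^2([-T,T];L^\infty)}\le (2T)^{1/4}\|u\|_{L^2([-T,T])}\|K\|_{L^2}\|\psi\|_{L^4([-T,T];L^\infty)}$. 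For part $(ii)$ the same computation is run in $\H^s$ after a fractional Leibniz estimate $\|K\psi\|_{\H^s}\lesssim \|K\|_{\H^s}\|\psi\|_{\W^{s,\infty}}$, using the equivalence \eqref{equiv}. The decisive feature is the factor $(2T)^{1/4}$: the contraction constant depends only on $\|u\|_{L^2}$ and $\|K\|$ and tends to $0$ with $T$, so a Banach fixed point gives a unique local solution on a time interval whose length depends on $u$ and $K$ but not on $\|\psi_0\|$.

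Global existence and the quantitative bounds then follow from the $L^2$ conservation law. Since $uK$ is real and $H$ is self-adjoint, the flow (which is linear in $\psi$) is $L^2$-isometric; I would justify $\|\psi(t)\|_{L^2}=\|\psi_0\|_{L^2}$ rigorously by a frequency truncation of $H$ and passage to the limit, as the raw regularity $\psi\in\mathcal C_tL^2$ does not allow one to differentiate the norm directly. Because the local time depends only on how the $L^2$-mass of $u$ is distributed, absolute continuity of $\tau\mapsto\int|u|^2$ lets me cover any $[-T,T]$ by finitely many subintervals on each of which the contraction closes; concatenating the local solutions yields the global solution and, summing the corresponding Strichartz bounds, the a priori estimate \eqref{norminf} with a constant of the announced form. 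For part $(ii)$ the $\H^s$-bound is propagated by the same finite iteration together with a Gronwall argument on the $X^s_T$-norm.

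For the compactness of the attainable set I would follow the Ball--Marsden--Slemrod philosophy, replacing the boundedness of the control operator (which fails here) by the dispersive structure. Fix $T,M$ and endow the ball $B_M=\{u:\|u\|_{L^2([-T,T])}\le M\}$ with the weak topology, in which it is compact. The claim reduces to showing that $\Theta:(t,u)\mapsto\psi_u(t)$ is continuous from $[-T,T]\times B_M$ (weak) into $L^2$ (resp. $\H^s$) with the strong topology; its image is then compact, and writing $\R=\bigcup_n[-n,n]$ and exhausting $L^2_{loc}$ by the balls $B_m$ displays the full attainable set as a countable union of compacts. Joint continuity follows once one knows the fixed-time statement "$u_n\rightharpoonup u \Rightarrow \psi_{u_n}(t)\to\psi_u(t)$ strongly'' (the remaining equicontinuity in $t$ being supplied, uniformly along convergent sequences, by the $(2T)^{1/4}$-bound and the strong continuity of the group on the resulting precompact sets). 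Splitting the Duhamel difference, the term $\int_0^te^{i(t-\tau)H}u_nK(\psi_{u_n}-\psi_u)\,d\tau$ is absorbed by the contraction, and everything rests on showing that $A_n(t):=\int_0^t e^{i(t-\tau)H}(u_n-u)(\tau)K\psi_u(\tau)\,d\tau$ tends to $0$ in $X_T$.

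The energy-norm convergence $\sup_t\|A_n(t)\|_{L^2}\to0$ is clean: writing $A_n(t)=e^{itH}\int_0^t (u_n-u)(\tau)h(\tau)\,d\tau$ with $h(\tau)=e^{-i\tau H}K\psi_u(\tau)$, the bound $\|h(\tau)\|_{L^2}\le\|K\|_{L^2}\|\psi_u(\tau)\|_{L^\infty}$ shows $h\in L^2([-T,T];L^2)$, so $v\mapsto\int v\,h\,d\tau$ is Hilbert--Schmidt, hence compact, from $L^2_t$ into $L^2_x$; weak convergence $u_n-u\rightharpoonup0$ thus gives strong convergence, and an Arzel\`a--Ascoli argument upgrades this to uniformity in $t$. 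The genuine obstacle is the Strichartz-norm convergence $\|A_n\|_{L^4([-T,T];L^\infty)}\to0$, which is exactly what is needed to close the nonlinear iteration but does not follow from weak convergence alone; it amounts to the compactness of the operator $v\mapsto\int_0^t e^{i(t-\tau)H}v\,K\psi_u\,d\tau$ from $L^2_t$ into $L^4_tL^\infty_x$. I would establish this either by approximating $K\psi_u$ in $L^2_tL^2_x$ by profiles of compact spatial support, for which the Kato smoothing \eqref{smoothing} yields a genuine gain of derivatives (the weight $\langle x\rangle^{-1/2}$ being harmless on the support) and hence a compact embedding, or by a Christ--Kiselev argument reducing to the untruncated operator $v\mapsto e^{itH}\int_{-T}^{T} v\,e^{-i\tau H}K\psi_u\,d\tau$, which is compact as the composition of the Hilbert--Schmidt map above with the homogeneous Strichartz bound. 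Carrying this compactness through the $\W^{s,\infty}$-norms of part $(ii)$, via the product estimates already used, is where most of the technical care will be required.
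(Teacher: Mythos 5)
Your well-posedness argument (fixed point in $X^s_T=\mathcal{C}_T\H^s\cap L^4_T\W^{s,\infty}$, Duhamel with the dual pair $(\infty,2)$, H\"older in time producing a factor $T^{1/4}$, $L^2$-conservation, iteration over subintervals) is essentially the paper's proof, and the global bounds follow the same way. The divergence, and the soft spot, is in the compactness step.

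You have set yourself a harder problem than the one that needs to be solved, and your proposed resolutions of it are not clearly sound. The attainable-set compactness only requires $\psi_{u_n}(t_n)\to\psi_u(t)$ in $\H^s$, i.e.\ convergence of $z_n=\psi_u-\psi_{u_n}$ in $L^\infty_T\H^s$; the Strichartz norm of $z_n$ never needs to converge. The apparent need for $\|A_n\|_{L^4_TL^\infty}\to 0$ comes from estimating $\|u_nKz_n\|_{L^1_tL^2_x}\le\|u_n\|_{L^2_t}\|K\|_{L^2}\|z_n\|_{L^2_tL^\infty_x}$, which reintroduces the $L^\infty_x$ norm of $z_n$. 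The paper's way out is to place $u_nKz_n$ in a different dual Strichartz space: with $(q,r)=(4,\infty)$ admissible in $d=1$, the inhomogeneous estimate \eqref{Stri1} with output $(\infty,2)$ and input $(q',r')=(4/3,1)$ gives $\|z_n(t)\|_{L^2}\le\epsilon_n+C\|u_nKz_n\|_{L^{4/3}_tL^1_x}$, and $\|Kz_n\|_{L^1_x}\le\|K\|_{L^2}\|z_n\|_{L^2_x}$ (for $s>0$ one takes $r$ large but finite and uses the fractional Leibniz rule plus Sobolev to get $\|Kz_n\|_{\W^{s,r'}}\lesssim\|K\|_{\H^s}\|z_n\|_{\H^s}$). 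Since $q'=4/3<2$, a Gr\"onwall inequality on $\|z_n(t)\|^{q'}_{\H^s}$ closes entirely in $L^\infty_T\H^s$ and yields $\|z_n\|_{L^\infty_T\H^s}\le C(T)\epsilon_n$. This is the key trick you are missing.

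As for your two routes to $\|A_n\|_{L^4_TL^\infty}\to 0$: the Christ--Kiselev lemma transfers boundedness from the untruncated to the truncated operator, not compactness or weak-to-strong continuity, so the reduction you describe does not go through as stated; and the Kato smoothing \eqref{smoothing} gains exactly $1/2$ a derivative locally in space, which is borderline for $L^\infty(\R)$ by Sobolev, so the "compact embedding" you invoke is not there without further work. If you did insist on the $L^4_TL^\infty$ convergence, the clean way is the one the paper uses for its term $\delta^3_n$: expand $h=K\psi_u\in L^2_TL^2_x$ on the Hermite basis, note that $e^{i(t-\tau)H}h_k=e^{i(2k+1)(t-\tau)}h_k$ so the finite-rank part converges to $0$ uniformly in $t$ by weak convergence plus Ascoli, and control the tail uniformly in $n$ by the operator bound $\|\int_0^te^{i(t-\tau)H}v\tilde h\,d\tau\|_{L^4_tL^\infty_x}\le C\|v\|_{L^2_t}\|\tilde h\|_{L^2_tL^2_x}$ from \eqref{Stri1}. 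Your Hilbert--Schmidt argument for the $L^\infty_tL^2_x$ convergence of $A_n$ is a fine (and slightly slicker) substitute for the paper's contradiction argument for $\epsilon_n$.
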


 This result shows that there are more obstacles than the continuity of the control operator
 \begin{equation}\label{map-prod}
 \begin{array}{rcl}
\H^s(\R)&\longrightarrow&\H^s(\R)\\[3pt]
\dis  \psi&\longmapsto &  K\psi,
 \end{array}
 \end{equation}
 for controllability  (since the map~\eqref{map-prod} is not continuous in general for a given $K \in\H^s(\R)$ when $0 \leq s \leq 1/2$). In the proof, we will crucially use the space-time Strichartz estimates to control~$K\psi$ \big(by showing that $K \psi \in L_{loc}^2\big(\R ;\H^s(\R)\big)$ when $\psi_0 \in \H^s(\R)$ and $K \in \H^s(\R)$\big) and to prove the compactness result. \medskip

Notice that for $s>1/2$, the result of Theorem~\ref{thm1.1} is a direct consequence of \cite[Theorem 3.6]{BMS}, because in this case, the map \eqref{map-prod} is continuous (see the discussion at the beginning of Section~\ref{sect1.2}).  Similarly, when $K \in \W^{1,\infty}(\R)$, then one has the strong result of Theorem~\ref{thmKnb}. The result of Theorem \ref{thm1.1} is relevant when the potential has limited regularity, namely $K \in \H^s(\R)$, $0 \leq s\leq 1/2$.\\

The previous approach also holds for the non-linear problem. Namely, consider the    cubic equation  
   \begin{equation} \label{NLS1}
  \left\{
      \begin{aligned}
      & i \partial_t \psi +H \psi =u(t) K(x) \psi -\sigma |\psi|^2\psi, \qquad (t,x)\in \R \times \R,
       \\  & \psi(0,x)=\psi_0(x) \in \H^s(\R),
      \end{aligned}
    \right.
\end{equation}
where $\sigma =\pm 1$ and $K \in \H^s(\R)$ for some  $s\geq 0$. Then we have

\begin{thm}\label{thm1.2}
Let $s \geq 0$, $K \in \H^s(\R;\R)$,  $u\in L^{2}_{loc}(\R;\R)$ and $\psi_0 \in \H^s(\R;\C)$. Then there exists a unique global solution to  equation \eqref{NLS1} in the class
 \begin{equation*} 
\psi \in   \mathcal{C}\big(\R; \H^s(\R)\big) \cap  L_{loc}^4\big(\R; \W^{s,\infty}(\R)\big).
 \end{equation*} 
   This solution satisfies  
 $$\|\psi(t)\|_{L^2(\R)} =\|\psi_0\|_{L^2(\R)},\quad \forall \,t \in \R,$$
 and for all $T>0$
      \begin{equation}\label{norminf-s}
 \|  \psi \|_{L^{\infty}( [-T,T]; \H^s(\R))}  +  \|  \psi \|_{L^4( [-T,T]; \W^{s,\infty}(\R))}  \leq  C\big(  T,  \|\psi_0\|_{\H^s(\R)} ,   \|u\|_{L^{2}([-T,T])}\big).
   \end{equation}  ~
   
Moreover, the attainable set 
$$
\bigcup_{t \in \R}\; \bigcup_{ u \in L^2_{loc}(\R;\R)} \big\{ \psi(t)\big\}
$$
is  a countable union of compact subsets of $\H^s(\R)$.
\end{thm}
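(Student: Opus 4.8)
The plan is to mirror the linear analysis of Theorem~\ref{thm1.1} and to treat the cubic term $\sigma|\psi|^2\psi$ as an additional, locally Lipschitz perturbation handled by the same Strichartz machinery; the only genuinely new difficulty will arise in the compactness statement, where the presence of the nonlinearity forces us to upgrade a weak-convergence argument into a strong one. I would work in
$$X_T=\mathcal{C}\big([-T,T];\H^s(\R)\big)\cap L^4\big([-T,T];\W^{s,\infty}(\R)\big),$$
and set up the fixed point for the Duhamel formulation
$$\psi(t)=\e^{itH}\psi_0-i\int_0^t\e^{i(t-\tau)H}\big(u(\tau)K\psi(\tau)-\sigma|\psi(\tau)|^2\psi(\tau)\big)\,d\tau.$$
Both source terms will be placed in the dual Strichartz space $L^1([-T,T];\H^s)$, which is legitimate since $(\infty,2)$ is an admissible couple in dimension $d=1$ and its dual couple is $(1,2)$; the estimates \eqref{Stri}--\eqref{Stri1} applied with the output couples $(\infty,2)$ and $(4,\infty)$ then recover the two components of $X_T$.

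The first technical ingredient is a pair of product estimates in the harmonic Sobolev scale. Using the equivalence \eqref{equiv} to reduce $\|\cdot\|_{\H^s}$ and $\|\cdot\|_{\W^{s,\infty}}$ to their flat $(-\Delta)^{s/2}$ and weighted $\<x\>^{s}$ constituents, a fractional Leibniz (Kato--Ponce type) argument yields $\|K\psi\|_{\H^s}\lesssim\|K\|_{\H^s}\|\psi\|_{\W^{s,\infty}}$ and $\||\psi|^2\psi\|_{\H^s}\lesssim\|\psi\|_{\W^{s,\infty}}^2\|\psi\|_{\H^s}$, the point being that all derivatives can be made to fall on the rough factor $K$ (respectively on a single copy of $\psi$) so that the low-regularity factor is only ever measured in $\W^{s,\infty}$. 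Combined with H\"older's inequality in time, which in each case produces a positive power of $T$, this gives
$$\|uK\psi\|_{L^1_T\H^s}\lesssim T^{1/4}\|u\|_{L^2_T}\|K\|_{\H^s}\|\psi\|_{L^4_T\W^{s,\infty}},\qquad \||\psi|^2\psi\|_{L^1_T\H^s}\lesssim T^{1/2}\|\psi\|_{L^4_T\W^{s,\infty}}^2\|\psi\|_{L^\infty_T\H^s}.$$
A standard contraction on a ball of $X_T$ then produces a unique local solution on an interval whose length depends only on $\|\psi_0\|_{\H^s}$ and $\|u\|_{L^2_T}$, together with the local version of \eqref{norminf-s}. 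I expect this step to be routine. Conservation of the $L^2$ norm follows by pairing the equation with $\ov\psi$ and taking imaginary parts (using that $u(t)K(x)$ is real and that the cubic term is gauge invariant), justified at the level of smooth approximations. Global existence is then reached in two stages: the cubic nonlinearity is mass-subcritical in dimension one, so the $L^2$ local time depends only on the conserved mass and the solution is global in $L^2$, whence $\|\psi\|_{L^4([-T,T];L^\infty)}<\infty$ for every $T$; persistence of regularity next propagates the $\H^s$ bound by splitting $[-T,T]$ into finitely many subintervals on which both $\|\psi\|_{L^4L^\infty}$ and $\|u\|_{L^2}$ are small (their number controlled by the $L^2$ theory), iterating the a priori estimate, and thereby obtaining \eqref{norminf-s} on all of $[-T,T]$.

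The substantive point is the compactness of the attainable set. Writing it as the countable union over $T,M\in\N$ of
$$\mathcal{A}_{T,M}=\big\{\psi(t):t\in[-T,T],\ \|u\|_{L^2([-T,T])}\le M\big\},$$
it suffices to prove that each $\mathcal{A}_{T,M}$ is compact in $\H^s(\R)$. Since the ball $\{\|u\|_{L^2_T}\le M\}$ is weakly compact and metrizable and $[-T,T]$ is compact, I would realize $\mathcal{A}_{T,M}$ as the image of a compact metric space under the map $(t,u)\mapsto\psi(t)$ and establish that this map is continuous from the weak topology on the controls to the strong topology of $\H^s$. Concretely, given $u_n\rightharpoonup u_\ast$ in $L^2_T$ and $t_n\to t_\ast$, I would show $\psi_n\to\psi_\ast$ in $\mathcal{C}([-T,T];\H^s)$: the difference $w_n=\psi_n-\psi_\ast$ solves a linear equation driven by the terms $u_nKw_n$ and $|\psi_n|^2\psi_n-|\psi_\ast|^2\psi_\ast$ (both absorbed contractively on short intervals by the product estimates above, once strong convergence is known to propagate) and by the source $(u_n-u_\ast)K\psi_\ast$.

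The heart of the matter, and the step I expect to be hardest, is that the linear operator
$$g\longmapsto\int_0^t\e^{i(t-\tau)H}\big(g(\tau)K\psi_\ast(\tau)\big)\,d\tau$$
is compact from $L^2_\tau$ into $\H^s$, so that $u_n-u_\ast\rightharpoonup0$ forces the source term to converge to $0$ strongly. This compactification is the same smoothing/Strichartz mechanism already exploited for Theorem~\ref{thm1.1}; combined with a short-interval contraction iterated finitely many times over $[-T,T]$ (whose total number of steps is controlled exactly as in the global-existence argument), it upgrades weak convergence of the controls to strong convergence of the solutions, passes to the limit in the cubic term, and closes the argument.
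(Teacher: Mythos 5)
Your proposal follows essentially the same route as the paper's proof: a contraction in $X^s_T=\mathcal{C}([-T,T];\H^s)\cap L^4([-T,T];\W^{s,\infty})$ based on the fractional Leibniz/Sobolev product estimates, globalization via $L^2$ conservation together with a subdivision of $[-T,T]$ into intervals on which the relevant norms are small, and compactness of the attainable set via weak-to-strong continuity of $(t,u)\mapsto\psi(t)$, whose key ingredient is exactly the paper's lemma that $u_n\rightharpoonup u$ forces $\sup_t\big\|\int_0^t(u_n-u)(\tau)\,e^{i(t-\tau)H}(K\psi)(\tau)\,d\tau\big\|_{\H^s}\to 0$, using that $K\psi\in L^2_T\H^s$ thanks to the Strichartz bound. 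The only slip is cosmetic: the Leibniz rule \eqref{leibniz} does not put all derivatives on $K$ and requires a finite Lebesgue exponent on the differentiated factor, so the pointwise product bound should read $\|K\psi\|_{\H^s}\lesssim\|K\|_{\H^s}\big(\|\psi\|_{\H^s}+\|\psi\|_{\W^{s,\infty}}\big)$ (via interpolation between $\H^s$ and $\W^{s,\infty}$), which after H\"older in time still yields a positive power of $T$ and closes the fixed point in $X^s_T$ exactly as you describe.
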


This result is relevant in the sense that it shows that the non-linear term does not help to control the equation.\medskip

 All the results of this section   also hold for perturbations  of $H$, namely, when $H$ is replaced with $H+W$, where $W$ is in the Schwartz class $\mathcal{S}(\R^d; \R)$. The term $W\psi$ can be treated as a perturbation of the non-linear term.

\subsubsection{The non-linear Schr\"odinger equation in dimension $d=3$}  In order to get similar results to  Theorem~\ref{thm1.2} in higher dimension, one needs to impose more regularity on the initial condition and more regularity on the potential. This in turn will allow us to  consider a larger set of controls, namely $u \in \cup_{r>1}L_{loc}^r(\R)$ instead of $\dis u \in L^2_{loc}(\R)$, as assumed in Theorem~\ref{thm1.2}. \medskip

In this paragraph, we fix $d=3$ and we study the defocusing non-linear problem 
\begin{equation}\label{NLS}
  \left\{
      \begin{aligned}
      & i \partial_t \psi +H \psi =u(t) K(x) \psi- |\psi|^2 \psi, \qquad (t,x)\in \R \times \R^3,
       \\  & \psi(0,x)=\psi_0(x) \in \H^1(\R^3).
      \end{aligned}
    \right.
\end{equation} 

To begin with, thanks to~\eqref{Stri} and~\eqref{Stri1} we are able to state a global well-posedness result adapted to our control problem.

  \begin{prop}\label{prop1.3}  Let $u\in L^1_{loc}(\R;\R)$.
  \begin{enumerate}[$(i)$]
  \item Let   $K \in \W^{1,\infty}(\R^3;\R)$. For $\psi_0 \in \H^1(\R^3)$ the  equation~\eqref{NLS} admits a unique global solution $\psi \in \mathcal{C}(\R ; \H^1(\R^3))$.   This defines a global flow $\psi(t)=\Phi^u(t)(\psi_0)$. 
  
   \item Moreover, this solution $\psi$ satisfies the bound
 \begin{equation}  \label{bound1}
      \| \psi \|_{L^{\infty}([-T,T]; \H^1(\R^3))} \leq C( \|\psi_0\|_{\H^1(\R^3)})(1+   \|u\|_{L^1([-T,T];\R)}) ,   
       \end{equation}
       for some $C=C(  \| \psi_0 \|_{ \H^1(\R^3) })$. Furthermore,   the following bound holds true
          \begin{equation} \label{bound14}
 \|  \psi \|_{L^2([-T,T];\W^{1,6}(\R^3))}  \leq  C\big(  T,  \|\psi_0\|_{\H^1(\R^3)} ,    \|u\|_{L^1([-T,T];\R)}\big).
   \end{equation} 
         \item Let $k\geq 1$ be an integer and assume that   $K \in \W^{k,\infty}(\R^3;\R)$. Then  for $\psi_0 \in \H^k(\R^3)$  the  equation~\eqref{NLS} admits a unique global solution $\psi \in \mathcal{C}(\R ; \H^k(\R^3))$ which satisfies the bounds
 \begin{equation} \label{bound}
      \| \psi \|_{L^{\infty}([-T,T]; \H^k(\R^3))} \leq C(T,k, \| \psi_0 \|_{ \H^k(\R^3)} ,  \|u\|_{L^1([-T,T];\R)} ),   
       \end{equation}
       and 
        \begin{equation}  \label{bound15} 
 \|  \psi \|_{L^2([-T,T];\W^{k,6}(\R^3))}  \leq  C\big(  T,  \|\psi_0\|_{\H^k(\R^3)} ,   \|u\|_{L^1([-T,T];\R)}\big).
   \end{equation} 
                \end{enumerate}
            \end{prop}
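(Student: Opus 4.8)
The plan is to build the solution by a fixed point argument on the Duhamel formulation and then to globalise it by means of the conservation laws. For $(i)$ I would work in $X_T=\mathcal C([-T,T];\H^1(\R^3))\cap L^2([-T,T];\W^{1,6}(\R^3))$, using that $(2,6)$ and $(\infty,2)$ are admissible couples in dimension $d=3$, and set up the contraction on the map
\[\Phi(\psi)(t)=e^{itH}\psi_0-i\int_0^t e^{i(t-\tau)H}\big(u(\tau)K\psi(\tau)-|\psi(\tau)|^2\psi(\tau)\big)\,d\tau.\]
The two source terms are handled by the inhomogeneous estimate \eqref{Stri1} with different dual couples. For the control term I would take $(q_2,r_2)=(\infty,2)$, so that $\|u K\psi\|_{L^1([-T,T];\H^1)}\lesssim \|K\|_{\W^{1,\infty}}\|u\|_{L^1([-T,T])}\|\psi\|_{L^\infty_T\H^1}$ by the Leibniz rule and \eqref{equiv}; this is the only place where the hypotheses $K\in\W^{1,\infty}$ and merely $u\in L^1_{loc}$ enter. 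For the cubic term, which is energy-subcritical in $\R^3$ ($s_c=1/2<1$), a standard estimate built on \eqref{Stri1} and the embedding $\H^1(\R^3)\hookrightarrow L^6(\R^3)$ gives a bound carrying a positive power $T^\theta$, so that, together with the smallness of $\|u\|_{L^1([-T,T])}$ for $T$ small, $\Phi$ is a contraction on a ball of $X_T$ on an interval whose length depends only on $\|\psi_0\|_{\H^1}$ and on the local mass of $u$. This yields local existence, uniqueness and continuous dependence, hence the flow $\Phi^u$.

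The a priori bounds rest on two conservation laws. Pairing the equation with $\bar\psi$ and taking imaginary parts gives, since $K$ and $u$ are real, the mass conservation $\|\psi(t)\|_{L^2}=\|\psi_0\|_{L^2}$. The core of $(ii)$ is the energy $E(t)=\tfrac12\<H\psi,\psi\>+\tfrac14\|\psi\|_{L^4}^4=\tfrac12\|\psi\|_{\H^1}^2+\tfrac14\|\psi\|_{L^4}^4$, which in the defocusing case is coercive, $\|\psi\|_{\H^1}^2\le 2E$. A direct computation shows that the drift (the self-adjoint and the cubic part) does not change $E$, so that only the control contributes and $\big|\tfrac{d}{dt}E(t)\big|=|u(t)|\,\big|\Im\<H\psi,K\psi\>\big|$. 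After one integration by parts, using that the $|x|^2$ part is real, this equals $|u(t)|\,\big|\int_{\R^3}\nabla K\cdot\Im(\bar\psi\nabla\psi)\big|$. The decisive observation is that the right-hand side is driven by the probability current and is therefore $\lesssim |u(t)|\,\|\nabla K\|_{L^\infty}\,\|\psi\|_{L^2}\,\|\nabla\psi\|_{L^2}\lesssim |u(t)|\,\sqrt{E(t)}$, where I used the conservation of $\|\psi\|_{L^2}=\|\psi_0\|_{L^2}$ and the coercivity. Hence $\tfrac{d}{dt}\sqrt{E}\lesssim|u|$, and integrating gives $\sqrt{E(t)}\le\sqrt{E(0)}+C\|u\|_{L^1([-T,T])}$, which is exactly the bound \eqref{bound1}, \emph{linear} in $\|u\|_{L^1}$. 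Inserting this control of $\|\psi\|_{L^\infty_T\H^1}$ back into \eqref{Stri}--\eqref{Stri1} then yields the space-time bound \eqref{bound14}.

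Globalisation follows from the blow-up alternative of the local theory together with \eqref{bound1}: the $\H^1$-norm cannot blow up in finite time, so the solution extends to all of $\R$. For $(iii)$ I would propagate higher regularity by applying $H^{k/2}$ to the Duhamel formula and using \eqref{equiv}. For $k\ge2$ one has $\H^k(\R^3)\hookrightarrow L^\infty(\R^3)$, so the cubic term obeys the tame bound $\||\psi|^2\psi\|_{\H^k}\lesssim\|\psi\|_{\H^2}^2\|\psi\|_{\H^k}$, while $\|uK\psi\|_{L^1_T\H^k}\lesssim\|u\|_{L^1}\|K\|_{\W^{k,\infty}}\|\psi\|_{L^\infty_T\H^k}$ by Leibniz, which is where $K\in\W^{k,\infty}$ is needed. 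Feeding these into \eqref{Stri1} and closing by Gronwall gives \eqref{bound} (now with the general, possibly exponential, dependence on $\|u\|_{L^1}$ that the statement allows), and one more application of \eqref{Stri} produces \eqref{bound15}.

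I expect the energy estimate of the second step to be the main obstacle: obtaining the dependence \emph{linear} in $\|u\|_{L^1}$, rather than the exponential bound a crude Gronwall applied to $E$ itself would give, requires exploiting the precise algebraic structure of the energy flux, namely that it is governed by the probability current and hence by $\sqrt E$ through the conserved mass. A second, more technical, difficulty is the continuation argument for $L^1$ controls: the local existence time is controlled by the modulus of continuity of $t\mapsto\int_0^t|u|$ rather than by a fixed power of the step length; since this function is absolutely continuous, hence uniformly continuous on compact intervals, the local steps cannot accumulate before a finite time once the $\H^1$-norm is a priori bounded, as in the $L^1$ continuation arguments of \cite{BCC2, ChTh1}.
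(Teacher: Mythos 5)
Your treatment of parts $(i)$ and $(ii)$ follows essentially the same route as the paper: a contraction in $\mathcal C_T\H^1\cap L^2_T\W^{1,6}$ with the control term absorbed through $\|u\|_{L^1}$ and the cubic term through a power of $T$, and then the energy $E(t)=\int(\bar\psi H\psi+|\psi|^2+\tfrac12|\psi|^4)$, whose derivative reduces after integration by parts to $2u(t)\,\Im\int\bar\psi\,\nabla K\cdot\nabla\psi$ and is hence bounded by $C|u(t)|\,\|\psi_0\|_{L^2}E^{1/2}(t)$; integrating $\frac{d}{dt}\sqrt E\lesssim|u|$ gives exactly the paper's bound \eqref{boundEn} and the linear dependence in \eqref{bound1}. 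You correctly identified this as the key structural point. One small gloss: to get \eqref{bound14} you cannot simply ``insert the $\H^1$ bound back into Strichartz'' on all of $[0,T]$, because the cubic term produces $\|\psi\|^2_{L^2_TL^\infty}\|\psi\|_{L^\infty_T\H^1}$ and, via Gagliardo--Nirenberg, $\|\psi\|_{L^2_TL^\infty}$ is itself controlled by $\|\psi\|^{1/2}_{L^2_T\W^{1,6}}$; the paper closes this by subdividing $[0,T]$ into intervals of length $\delta$ with $c\delta^{1/2}\|\psi\|^2_{L^\infty_T\H^1}=1/2$ and summing. Your local-existence step contains the needed estimate, but the iteration should be made explicit.

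There is a genuine gap in part $(iii)$. You propose the tame bound $\||\psi|^2\psi\|_{\H^k}\lesssim\|\psi\|_{\H^2}^2\|\psi\|_{\H^k}$ and claim that Gr\"onwall closes. For $k=2$ this inequality is cubic in the unknown norm $\|\psi(t)\|_{\H^2}$, so the resulting integral inequality $y(t)\le C+\int_0^t\big(y(s)^2+|u(s)|\big)y(s)\,ds$ is superlinear and Gr\"onwall only yields a local-in-time bound, not \eqref{bound}; and since the case $k=2$ is the base of any induction on $k$, the whole argument for higher regularity does not start. The paper's fix is precisely to use the Moser estimate in the form $\||\psi|^2\psi\|_{\H^k}\lesssim\|\psi\|_{L^\infty}^2\|\psi\|_{\H^k}$ and to observe that, by the Sobolev embedding $\W^{1,6}(\R^3)\subset L^\infty(\R^3)$ and the already-established bound \eqref{bound14}, one has $\|\psi\|_{L^\infty}\in L^2([0,T])$ with a bound depending only on $T$, $\|\psi_0\|_{\H^1}$ and $\|u\|_{L^1}$. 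The Gr\"onwall factor $\exp\big(C\int_0^T(\|\psi(s)\|^2_{L^\infty}+|u(s)|)\,ds\big)$ is then controlled by quantities already known at the $\H^1$ level, which gives \eqref{bound} and then \eqref{bound15}. You have all the ingredients for this (it is exactly the $\H^1$-level smoothing you proved in $(ii)$); you just need to route the estimate through $\|\psi\|_{L^2_TL^\infty}$ rather than through $\|\psi\|_{L^\infty_T\H^2}$.
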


The proof relies on a fixed point argument in Strichartz spaces which are well-adapted to control the non-linear term in \eqref{NLS}. Notice that  from \eqref{bound14}, we deduce that, for almost all $t \in \R$, 
   \begin{equation}\label{regpp}
    \psi(t) \in   \W^{1,6}(\R^3).
         \end{equation} 
             This is a smoothing effect for the solution,  but can not be interpreted as an obstruction to controllability of the equation~\eqref{NLS}, since the set of  times  such that \eqref{regpp} holds true depends on the control $u$. \medskip

We now state our result concerning the lack of controllability of~\eqref{NLS}

     \begin{thm}\label{BMS_NLS}
Let   $K \in \W^{1,\infty}(\R^3;\R)$ and $\psi_0 \in \H^1(\R^3)$. Denote by $\psi$ the solution of equation~\eqref{NLS} defined in Proposition~\ref{prop1.3}. Then the  attainable set 
$$
\bigcup_{t \in \R} \;\bigcup_{\substack{ u \in L_{loc}^r(\R;\R), \\  r>1}} \big\{\psi(t)\big\}
$$
is   a countable union of compact subsets of $\H^1(\R^3)$.
\end{thm}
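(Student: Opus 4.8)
The plan is to prove that the attainable set is a countable union of compact subsets of $\H^1(\R^3)$ by exhibiting, for each time horizon $T$ and each control-norm bound, a \emph{smoothing gain} that makes the relevant attainable slice precompact. The key observation, analogous to Corollary~\ref{cor-compact}, is that the solution $\psi$ itself will not land in a space more regular than $\H^1$ uniformly, but the difference between $\psi$ and a suitable explicit (compactly-parametrized) comparison flow should gain a fractional derivative. Concretely, I would first use the Duhamel formula
\begin{equation*}
\psi(t) = e^{itH}\psi_0 + \int_0^t e^{i(t-\tau)H}\big(u(\tau)K(x)\psi(\tau) - |\psi(\tau)|^2\psi(\tau)\big)\,d\tau,
\end{equation*}
and split the attainable set according to the countable exhaustion $\R = \bigcup_n [-n,n]$ in time and $L^r_{loc} = \bigcup_{m} \{\|u\|_{L^r([-n,n])}\le m\}$ in control size (ranging over a countable sequence $r = 1 + 1/j \to 1^+$). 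Since a countable union of countable unions of compacts is again a countable union of compacts, it suffices to fix $T$, fix $r>1$, fix $M>0$, and show that
\begin{equation*}
\mathcal{A}_{T,r,M} := \bigcup_{\substack{t\in[-T,T]\\ \|u\|_{L^r([-T,T])}\le M}} \big\{\psi(t)\big\}
\end{equation*}
is relatively compact in $\H^1(\R^3)$.

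Next I would establish the smoothing mechanism. From Proposition~\ref{prop1.3}, part $(iii)$ with $k=2$ (assuming $K\in\W^{2,\infty}$; but here $K\in\W^{1,\infty}$, so the honest route is a fractional analogue), or directly via the inhomogeneous Strichartz estimate~\eqref{Stri1} combined with the Kato-type smoothing~\eqref{smoothing}, I expect to show that the Duhamel integral term gains regularity beyond $\H^1$: namely that $\psi(t) - e^{itH}\psi_0 \in \H^{1+\beta}(\R^3)$ for some $\beta>0$, with a bound depending only on $T$, $\|\psi_0\|_{\H^1}$, and $M$. The forcing term $u K \psi - |\psi|^2\psi$ should be controlled in a dual Strichartz space uniformly over the admissible controls: the bound~\eqref{bound14} gives $\psi\in L^2([-T,T];\W^{1,6})$, and the embedding $\W^{1,6}(\R^3)\hookrightarrow L^\infty$ controls $|\psi|^2\psi$ in $L^2_t \H^1_x$ or a suitable $L^{q'}_t\W^{1,r'}_x$; meanwhile $uK\psi$ is controlled using $u\in L^r$ with $r>1$ and $K\in\W^{1,\infty}$. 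The gain of $\beta$ derivatives then comes from the smoothing property of the linear propagator applied to this forcing.

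I would then invoke compactness via a Rellich-type argument adapted to the harmonic Sobolev scale: the embedding $\H^{1+\beta}(\R^3)\hookrightarrow\H^1(\R^3)$ is \emph{compact}, because by the norm equivalence~\eqref{equiv} the $\H^{s}$ norm simultaneously controls $(-\Delta)^{s/2}$ regularity and polynomial spatial decay $\<x\>^{s}$, so bounded sets in $\H^{1+\beta}$ are both regular and tight, hence precompact in $\H^1$ by the Rellich--Kondrachov theorem together with the decay at spatial infinity. Thus the bounded subset $\{\psi(t)-e^{itH}\psi_0\}$ of $\H^{1+\beta}$ is relatively compact in $\H^1$. Finally, since $\{e^{itH}\psi_0 : t\in[-T,T]\}$ is the continuous image of the compact interval $[-T,T]$ under $t\mapsto e^{itH}\psi_0$ (continuity of the unitary group on $\H^1$), it is itself compact in $\H^1$; and $\mathcal{A}_{T,r,M}$ is contained in the sum (Minkowski sum) of these two relatively compact sets, hence relatively compact. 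The main obstacle I anticipate is honestly propagating the fractional smoothing gain $\beta>0$ under the sole hypothesis $K\in\W^{1,\infty}$ (rather than $\W^{2,\infty}$): one must verify that the nonlinear forcing term $uK\psi - |\psi|^2\psi$ lies in a function space for which the combination of Strichartz~\eqref{Stri1} and Kato smoothing~\eqref{smoothing} yields a strictly positive regularity gain, with constants uniform over $\|u\|_{L^r}\le M$, and in particular that the $L^r_t$ control with $r>1$ (as opposed to $r=2$) is still compatible with the Hölder-dual Strichartz exponents needed to close the estimate.
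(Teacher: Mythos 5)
Your reduction of the attainable set to countably many slices $\mathcal{A}_{T,r_j,M}$ (with $r_j=1+1/j$) is fine, and the compact embedding $\H^{1+\beta}(\R^3)\subset\H^1(\R^3)$ you invoke is indeed available on the harmonic Sobolev scale. But the core of your argument --- a uniform gain $\psi(t)-e^{itH}\psi_0\in\H^{1+\beta}(\R^3)$ for some $\beta>0$ --- does not hold for the \emph{nonlinear} equation \eqref{NLS}, and this is a genuine gap rather than the technicality you flag at the end. The obstruction is not the regularity of $K$ but the cubic term: the Kato smoothing estimate \eqref{smoothing} is a \emph{weighted} estimate, with an $\<x\>^{-1/2}$ loss, and in Theorem~\ref{thmKnb} the gain of $\beta<1/2$ derivatives is recovered only because the forcing $uK\psi$ carries the spatial decay of $K$, which absorbs that weight. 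The forcing $|\psi|^2\psi$ has no spatial decay beyond that of $\psi$ itself, so neither \eqref{smoothing} nor its retarded/dual version applies to $\int_0^t e^{i(t-s)H}(|\psi|^2\psi)\,ds$; and the inhomogeneous Strichartz estimate \eqref{Stri1} preserves regularity but never increases it (trying to place $|\psi|^2\psi$ in $L^{q'}_T\W^{1+\beta,r'}$ requires $\H^{1+\beta}$ control of $\psi$ to begin with, which is circular). This is precisely why the paper remarks, after \eqref{regpp}, that the a.e.-in-time smoothing $\psi(t)\in\W^{1,6}$ coming from \eqref{bound14} ``can not be interpreted as an obstruction to controllability'', and why Corollary~\ref{cor-compact} is stated only for the bilinear equation.

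The paper's proof of Theorem~\ref{BMS_NLS} takes a different route that avoids any derivative gain: it is the Ball--Marsden--Slemrod weak-compactness argument adapted to the nonlinear setting. One fixes $T$, $r>1$, $M$, takes $u_n\rightharpoonup u$ weakly in $L^1$ (using that bounded sets of $L^r$, $r>1$, are weakly sequentially compact) and $t_n\to t$, writes $z_n=\psi-\psi_n$ via Duhamel, splits into a linear part (handled by \cite[Lemma 3.7]{BMS}, since $K\in\W^{1,\infty}$ makes $\psi\mapsto K\psi$ bounded on $\H^1$) and a cubic difference (handled by Lemma~\ref{lem-prod} together with the uniform bound $\psi\in L^2_T\W^{1,6}$ of \eqref{bound14}), and closes with Gr\"onwall to get $\|z_n\|_{L^\infty_T\H^1}\le C\epsilon_n\to0$. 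Each slice is then compact as the sequentially continuous image of a sequentially compact set. If you want to salvage your argument, this continuity-plus-weak-compactness mechanism is the ingredient you need to substitute for the smoothing step; the uniform a priori bound \eqref{bound14} is what makes the Gr\"onwall step uniform over the control ball.
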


We are able to prove similar results in dimensions   $d=1$ and $d=2$, but we do not detail them, since the proofs are similar.    The same result also holds for the bi-linear Schr\"odinger equation, but it is not relevant to state it here, since it is a direct application of  \cite[Theorem 3.6]{BMS} (see the discussion at the beginning of Section~\ref{sect1.2}). \medskip
 
Again, the results of this section   also hold for perturbations  of $H$, namely, when $H$ is replaced with $H+W$, where $W$ is in the Schwartz class $\mathcal{S}(\R^d; \R)$. The term $W\psi$ can be treated as a perturbation of the non-linear term, and the corresponding energy functional is still coercive, which is needed in our argument.

\begin{rem}
Let $k\geq 1$ be an integer. As a consequence of Proposition~\ref{prop1.3} $(iii)$ we may similarly prove that for   $K \in \W^{k,\infty}(\R^3)$ and $\psi_0 \in \H^k(\R^3)$,  the  attainable set 
$$
\bigcup_{t \in \R} \;\bigcup_{\substack{ u \in L_{loc}^r(\R), \\  r>1}} \big\{\psi(t)\big\}
$$
is   a countable union of compact subsets of $\H^k(\R^3)$. 
\end{rem}

\begin{rem}
It is worth noticing that the different results developed in this paper (excepted Corollary~\ref{cor-compact}) also hold for the Schr\"odinger equation, in the case where $H$ is replaced with $\Delta=\sum_{j=1}^d \partial^2_{x_j}$, in other words  for equations of the form
\begin{equation}\label{nls-delta}
   i \partial_t \psi +\Delta \psi =u(t) K(x) \psi +\sigma |\psi|^2 \psi.
\end{equation}
In the argument, it is enough to observe that the inequalities \eqref{smoothing}, \eqref{Stri} and \eqref{Stri1} hold true for the operator $\Delta$ (instead of $H$) and the usual Sobolev spaces $H^s(\R^d)$, $W^{s,p}(\R^d)$ (instead of $\H^s(\R^d)$, $\W^{s,p}(\R^d)$). In this setting, the conclusion of Corollary~\ref{cor-compact} is that the attainable set is meagre in the sense of Baire (the compactness is lost because the embedding $H^{s_2}(\R^d) \subset H^{s_1}(\R^d)$ is not compact, $s_1 <s_2$). \medskip

 One should be able to adapt the approach developed in \cite[Section~2.2]{ILT} (in particular \cite[Lemma~1]{ILT}) to the  equation \eqref{nls-delta}. However, the argument of \cite[Section~2.2]{ILT} does not apply to~\eqref{NLS}, because it heavily relies on the space translation invariance of the problem. 
\end{rem}

 \subsection{Notations} 

 In this paper $c,C>0$ denote constants the value of which may change
from line to line. These constants will always be universal, or uniformly bounded. For $x \in \R^d$, we write $\<x\>=(1+|x|^{2})^{1/2}$. We will sometimes use the notations $L^{p}_{T}=L^{p}([0,T])$ and $L^{p}_{T}X=L^{p}([0,T]; X)$  for $T>0$.

\section{Proof of the results concerning the bi-linear equation and the Kato smoothing effect}

\subsection{Proof of Theorem~\ref{thmKnb}}~\medskip

To begin with, we observe that it is enough to work with non-negative times, by reversibility of the Schr\"odinger equation. Therefore in the sequel we assume $t\geq 0$.\medskip

    {\it Local existence:}  We consider the map 
  \begin{equation}\label{duha0}
 \Phi(\psi)(t)=e^{it H}\psi_0-i \int_0^t u(s) e^{i(t-s) H}( K \psi)ds,
 \end{equation} 
and we will show that it is a contraction  in the space
  \begin{equation*}
B_{k,T,R}= \big\{ \|\psi\|_{L^{\infty}_T\H^k}\leq R\big\},
 \end{equation*} 
 with $R>0$ and $T>0$ to be fixed.
From the fact that $e^{it H}$ is unitary in $\H^k$ and thanks to the  Leibniz rule we deduce that 
  \begin{eqnarray}\label{borneH0}
 \| \Phi(\psi)(t)\|_{\H^k} &\leq& \|\psi_0\|_{\H^k}  +\int_0^t  |u(s)| \big\| K \psi (s)\big\|_{\H^k}ds  \nonumber \\
 &\leq& \|\psi_0\|_{\H^k} +c\big\| K  \big\|_{\W^{k,\infty}} \int_0^t  |u(s)|  \big \| \psi(s) \big \|_{\H^k}ds .
 \end{eqnarray} 
 Therefore we have 
   \begin{equation*} 
 \| \Phi(\psi)\|_{L^{\infty}_T\H^k}  
 \leq  \|\psi_0\|_{\H^k} +c\big(\int_0^T  |u(s)| ds \big) \| K  \|_{\W^{k,\infty}}  \| \psi  \|_{L_T^{\infty}\H^k}.
 \end{equation*} 
  We now choose $R =2 \|\psi_0\|_{\H^k}$ and  we fix   $T>0$ such that  $c\int_0^{T}  |u(s)| ds \leq \| K \|^{-1}_{\W^{k,\infty}}/2$. As a consequence,~$\Phi$ maps $B_{k,T,R}$ into itself. With similar estimates we can show that $\Phi$ is a contraction in $B_{k,T,R}$, namely 
    \begin{eqnarray*}
 \| \Phi(\psi_1)- \Phi(\psi_2)\|_{L^{\infty}_T\H^k}  
& \leq  & c\big(\int_0^T  |u(s)| ds \big)\| K  \|_{\W^{k,\infty}}   \| \psi_1- \psi_2\|_{{L^{\infty}_T\H^k}}\\
& \leq  & \frac12  \| \psi_1- \psi_2\|_{{L^{\infty}_T\H^k}}
  \end{eqnarray*} 
   \medskip

    {\it Global existence:} Assume that $T^{\star}>0$ is the maximal time of existence of the problem \eqref{BL22}. From the bound \eqref{borneH0}, with $\Phi(\psi)=\psi$ we deduce
      \begin{equation*} 
 \| \psi(t)\|_{\H^k} \leq  \|\psi_0\|_{\H^k} +c\big\| K  \big\|_{\W^{k,\infty}}\int_0^t  |u(s)|  \big\| \psi(s) \big\|_{\H^k}ds .
 \end{equation*} 
Therefore, by the Gr\"onwall lemma, we get that for all $t \leq T^{\star}$
   \begin{equation*} 
  \| \psi(t)\|_{\H^k} \leq   \|\psi_0\|_{\H^k} \e^{ c\| K \|_{\W^{k,\infty}}\int_0^t |u(s)|  ds   }.  
      \end{equation*}
The previous bound combined with the local existence theory implies that $T^{\star}=+\infty$. There exists a unique global solution ${\psi \in \mathcal{C}(\R ; \H^k(\R^d))}$ to \eqref{BL22}. \medskip

    {\it Proof of the smoothing effect:}    In order to prove~\eqref{solu*}, we use the Kato  smoothing effect~\eqref{smoothing}. Let $\psi$ be the solution to~\eqref{BL22} and set $\psi_1(t)=\psi(t) - \e^{itH} \psi_0 $. Then $\psi_1$ solves 
\begin{equation*} 
\psi_1(t) = -i \int_0^t u(s) e^{i(t-s) H}( K e^{isH}\psi_0)ds -i \int_0^t u(s) e^{i(t-s) H}( K \psi_1(s))ds.
\end{equation*}
Therefore
\begin{eqnarray}\label{b219} 
\|\psi_1\|_{L^{\infty}_T \H^{k+\beta}} &\leq & \|u K e^{itH}\psi_0\|_{L^1_T \H^{k+\beta}} +   \| u K \psi_1\|_{L^1_T \H^{k+\beta}} \nonumber \\
 &\leq & \|u\|_{L_T^2}  \|K e^{itH}\psi_0\|_{L^2_T \H^{k+\beta}} +    \|u\|_{L_T^2}  \| K \psi_1\|_{L^2_T \H^{k+\beta}}.
\end{eqnarray}\medskip

 $\bullet$ We write $k=2j$ with $j\geq 1$. Firstly we show that 
   \begin{equation}\label{sm1}
    \|K e^{itH}\psi_0\|_{L^2_T \H^{2j+\beta}} \leq C_T
 \end{equation}
  using the Leibniz rule:
     \begin{eqnarray} \label{eq2.5}
    \|K e^{itH}\psi_0\|_{  \H^{2j+\beta}} &= &     \|H^{j}(K e^{itH}\psi_0)\|_{ \H^{\beta}} \nonumber \\
    &\leq  &   C \sum_{\substack{ j_1,j_2,j_3 \in \N^d \\  |j_1|+|j_2|+|j_3|=2j   \\ |j_3|\neq 2j}   }   \|x^{j_1}\partial^{j_2}K \partial^{j_3}(e^{itH}\psi_0) \|_{ \H^{\beta}}\\
        &  &\qquad\qquad\qquad\qquad+\|K e^{itH}\psi_0 \|_{ \H^{\beta}}+\|K H^{j}(e^{itH}\psi_0) \|_{ \H^{\beta}}  \nonumber
 \end{eqnarray}
  where $\partial^{\ell}$ stands for derivatives in $x$ of order $|\ell|$. Each term in the sum is bounded by 
\begin{eqnarray*} 
  \|x^{j_1}\partial^{j_2}K \partial^{j_3}(e^{itH}\psi_0) \|_{ \H^{\beta}} &\leq &    \|x^{j_1}\partial^{j_2}K \partial^{j_3}(e^{itH}\psi_0) \|_{ \H^{1}} \\
  &\leq &  \|K \|_{\W^{|j_1|+|j_2|+1,\infty}}\|\psi_0\|_{\H^{j_3+1}} \\
    &\leq &  \|K \|_{\W^{2j+1,\infty}}\|\psi_0\|_{\H^{2j}} 
  \end{eqnarray*}
  thus 
  \begin{equation}\label{sm2}
  \|x^{j_1}\partial^{j_2}K \partial^{j_3}(e^{itH}\psi_0) \|_{L^2_T\H^{\beta}}  \leq CT^{1/2}.
\end{equation}
  Similarly, we have  $\|K e^{itH}\psi_0 \|_{ \H^{\beta}} \leq  \|K \|_{\W^{1,\infty}}\|\psi_0\|_{\H^{1}} $, thus 
  \begin{equation}\label{sm4}
  \|K e^{itH}\psi_0 \|_{L^2_T\H^{\beta}}  \leq CT^{1/2}.
\end{equation}
To control the contribution of the last term in \eqref{eq2.5}, we write  
  \begin{eqnarray} \label{BB1}
  \|K H^{j}(e^{itH}\psi_0)\|_{\H^{\beta}} &= &   \|Ke^{itH} (H^{j} \psi_0)\|_{\H^{\beta}} \nonumber \\
  &\leq  &  \big\| [H^{\beta/2} , K] e^{itH} (H^{j} \psi_0)\big\|_{L^2} +  \|  K H^{\beta/2}e^{itH} (H^{j} \psi_0)\|_{L^2}.
  \end{eqnarray}
  We use the commutator estimate \cite[Lemma 18]{poiret1} to get the  bound 
    \begin{equation} \label{BB2}
  \big\| [H^{\beta/2} , K] e^{itH} (H^{j} \psi_0)\big\|_{L^2}  \leq C \|\psi_0\|_{  \H^{2j}}.
    \end{equation}
By the smoothing effect~\eqref{smoothing},
  \begin{equation} \label{BB3}
 \|  K H^{\beta/2}e^{itH} (H^{j} \psi_0)\|_{L_T^2 L^2} \leq C_T,
  \end{equation}
hence by~\eqref{BB1},~\eqref{BB2} and~\eqref{BB3}
  \begin{equation}\label{sm3}
  \|K H^{j}(e^{itH}\psi_0)\|_{L^2_T\H^{\beta}}  \leq C_T.
  \end{equation}
Hence, from \eqref{sm2},  \eqref{sm4}, and \eqref{sm3} we deduce \eqref{sm1}. In the case $k=j=0$, the estimate~\eqref{sm1} is deduced from~\eqref{BB1}--\eqref{sm3}.
\medskip

 $\bullet$ We now show that  $ \|K \psi_1\|_{L^2_T \H^{k+\beta}} \leq C T^{1/2}  \|\psi_1\|_{L^\infty_T \H^{k+\beta}} $. By the fractional Leibniz rule \eqref{leibniz}, we have, for all $p> 2$
   \begin{equation} \label{conti}
 \|K \psi_1\|_{  \H^{k+\beta}} \leq   C\|K \|_{L^\infty}  \| \psi_1\|_{  \H^{k+\beta}}+ C\|K \|_{\W^{k+\beta, p}}  \| \psi_1\|_{ L^{q}},
  \end{equation}
with $q=2p/(p-2)$. For $p\gg 2$ large enough (hence $q>2$ small enough), by the Sobolev inequalities, $ \| \psi_1\|_{ L^{q}} \leq C  \| \psi_1\|_{ \H^{k+\beta}}$ which controls the first term in \eqref{conti}. To treat the second, we claim that  $\|K \|_{\W^{k+\beta, p}}  \leq C\|K \|_{\W^{k+1, \infty}}$, for $p\gg 2$ large enough. Actually, we observe that the decay $|K| \leq C \<x\>^{-k-1}$ implies that $K \in L^r(\R^d)$ for $r\gg 2$ large enough. Then one uses the interpolation inequality
$$\| K\|_{\W^{(1-\theta) s, r/\theta}(\R^d)} \leq \| K\|^{1-\theta}_{\W^{ s, \infty}(\R^d)}\| K\|^{\theta}_{L^{r}(\R^d)}, \quad 0\leq \theta \leq 1,   $$
with $s=k+1$, $\theta$ such that $(1-\theta)s=k+\beta$ and $r= \theta p$.

As a conclusion, with~\eqref{b219} we infer
$$ \|\psi_1\|_{L^{\infty}_T \H^{k+\beta}} \leq C_T+C T^{1/2}   \|u\|_{L_T^2}  \|\psi_1\|_{L^{\infty}_T \H^{k+\beta}} ,  $$
which implies, for $T>0$ small enough and which only depends on $u$ and $K$, that $ \|\psi_1\|_{L^{\infty}_T \H^{k+\beta}} \leq 2 C_T$. We are able to iterate this argument to obtain that 
    \begin{equation}\label{conti0} 
     \psi_1  \in \mathcal{C}\big(\R; \H^{k+\beta}(\R^{d})\big),
         \end{equation}
with the bound
    \begin{equation}\label{borne1*}
 \|    \psi_1 \|_{L^\infty_T \H^{k+\beta}}\leq C(T,k, \| \psi_0 \|_{ \H^k} ,  \|u\|_{L^2_T} ).
         \end{equation}
Notice that the previous estimate implies 
  \begin{equation}  \label{conti1}
\psi_1  \in  L^2\big([-T,T]; \H^{k+\beta}(\R^{d})\big).
    \end{equation}
    
Let us now show that  for all $T>0$, $\partial_t \psi \in L^2_T \H^{k-2}$, which in turn will imply that
   \begin{equation} \label{conti2} 
 \partial_t  \psi_1  \in L^2\big([-T,T]; \H^{k-2}(\R^{d})\big).
         \end{equation}
 From the equation~\eqref{BL22}, we get for all $-T\leq t\leq T$
  \begin{equation*} 
  \| \partial_t \psi (t)\|_{  \H^{k-2}} \leq   \| \psi(t) \|_{  \H^{k}}+ |u(t)| \| K\|_{\W^{k+1, \infty}} \| \psi(t) \|_{  \H^{k}},
    \end{equation*}
 thus 
  \begin{equation} \label{borne2*}
  \| \partial_t \psi \|_{L^2_T \H^{k-2}} \leq  CT^{1/2} \| \psi \|_{ L_T^{\infty} \H^{k}}+ \|u\|_{L^2_T} \| K\|_{\W^{k+1, \infty}} \| \psi \|_{ L_T^{\infty} \H^{k}},
    \end{equation}
    hence the result. 
    
    By the interpolation Lemma~\ref{lem-interp} in the appendix, applied to \eqref{conti1} and \eqref{conti2}, there exist $\alpha>0$ and $\kappa>0$ such that 
 \begin{equation}\label{conti3} 
\psi_1  \in  \mathcal{C}^{\alpha}\big([-T,T]; \H^{k+\beta-\kappa}(\R^{d})\big).
    \end{equation}
    Finally we interpolate~\eqref{conti0} and ~\eqref{conti3}, and thus, for all $\beta'<\beta$, there exists $\alpha'>0$ such that $\psi_1  \in \mathcal{C}^{\alpha'}\big([-T,T]; \H^{k+\beta'}(\R^{d})\big)$. The bound~\eqref{borne*} follows from~\eqref{borne1*},~\eqref{borne2*}, by  the interpolation argument. \bigskip


\subsection{Proof of Corollary~\ref{cor-compact}}
Fix $\psi_0 \in \H^k(\R^d)$. Let $(u_n)_{n\geq 1}$ be such that $\|u_n\|_{L^2_T} \leq K$ and consider $\psi_n$ the solution of~\eqref{BL22}  associated to~$u_n$, and let $(t_n)_{n\geq 1} \subset [-T,T]$. Set $\Psi_n(t_n)=  \psi_n(t_n) -e^{it_n H}\psi_0$.  Up to a subsequence, we can assume that $t_n \to t$ for some $t \in [-T,T]$. Let $\beta< \beta'<1/2$, then by~\eqref{borne*}, 
$$  \|\Psi_n \|_{\mathcal{C}^{\alpha}_T \H^{k+\beta'}} \leq C.$$
By the compact embedding $  \mathcal{C}^{\alpha}\big([-T,T]; \H^{k+\beta'}(\R^{d})\big) \subset   \mathcal{C} \big([-T,T]; \H^{k+\beta}(\R^{d})\big)$, there exists $\Psi  \in  \mathcal{C} \big([-T,T]; \H^{k+\beta}(\R^{d})\big)$ such that $\Psi_n \to \Psi$, up to a subsequence. Next 
\begin{eqnarray*}
\|\Psi_n(t_n)-\Psi(t)\|_{\H^{k+\beta}} &\leq & \|\Psi_n(t_n)-\Psi(t_n)\|_{\H^{k+\beta}}+ \|\Psi(t_n)-\Psi(t)\|_{\H^{k+\beta}} \nonumber  \\
&\leq &\sup_{\tau \in [-T,T]} \|\Psi_n(\tau)-\Psi(\tau)\|_{\H^{k+\beta}}+ \|\Psi(t_n)-\Psi(t)\|_{\H^{k+\beta}}.
\end{eqnarray*}
The first term in the previous line tends to 0 since $\Psi_n \to \Psi$, and the second as well, since $\Psi \in  \mathcal{C} \big([-T,T]; \H^{k+\beta}(\R^{d})\big)$.

\section{The Schr\"odinger equation in dimension $d=1$}

We prove Theorem \ref{thm1.1} and Theorem \ref{thm1.2} at the same time, namely we consider the equation 
   \begin{equation}\label{nls-proof}
  \left\{
      \begin{aligned}
      & i \partial_t \psi +H \psi =u(t) K(x) \psi -\sigma |\psi|^2\psi, \qquad (t,x)\in \R \times \R,
       \\  & \psi(0,x)=\psi_0(x) \in \H^s(\R),
      \end{aligned}
    \right.
\end{equation}
 with $\sigma=0$ or $\sigma=1$.\medskip

 {\it Local existence:} Let $\psi_0 \in \H^s(\R)$. We consider the map 
  \begin{equation*} 
 \Phi(\psi)(t)=e^{it H}\psi_0-i \int_0^t u(\tau) e^{i(t-\tau) H}( K \psi)d\tau+i \sigma \int_0^t e^{i(t-\tau) H}(|\psi|^2 \psi)d\tau,
 \end{equation*} 
and we will show that it is a contraction in some Banach space.  Let us define the Strichartz space~$X^s_T$ by the norm     $\|\psi\|_{X^s_T}= \| \psi \|_{L^{\infty}_T\H^s}+\| \psi \|_{L^{4}_T\W^{s,\infty}}$, and define  the space  
  \begin{equation*}
B_{s,T,R}= \big\{ \|\psi\|_{X^s_T}\leq R\big\},
 \end{equation*} 
 with $R>0$ and $T>0$ to be fixed.
 
By the Strichartz estimates \eqref{Stri} and \eqref{Stri1}  we get
   \begin{eqnarray*} 
 \| \Phi(\psi)\|_{X^s_T} &\leq& c \|\psi_0\|_{\H^s}+c\int_0^T \big\| |\psi|^2 \psi \big\|_{\H^s}ds+c\int_0^T  |u(\tau)| \big\| K \psi \big\|_{\H^s}d\tau  \nonumber \\
 &\leq&c \|\psi_0\|_{\H^s}+c  \big\| |\psi|^2 \psi \big\|_{L^1_T\H^s}+c \|u\|_{L^2_T} \big\| K \psi \big\|_{L^2_T\H^s}.
 \end{eqnarray*} 
 
 $\bullet$ By the generalised Leibniz rule \eqref{leibniz},
 $$\big\| |\psi|^2 \psi \big\|_{\H^s} \leq c\|  \psi \|^2_{L^\infty} \|  \psi \|_{\H^s}    , $$
 thus by the H\"older inequality
  \begin{equation}\label{temps-u}
  \big\| |\psi|^2 \psi \big\|_{L^1_T\H^s} \leq c\|  \psi \|^2_{L^2_TL^\infty} \|  \psi \|_{L^{\infty}_T\H^s}  \leq c T^{1/2}\|  \psi \|^2_{L^4_TL^\infty}\| \psi\|_{X^s_T}. 
  \end{equation}
 Since $\|  \psi \|_{L^4_TL^\infty} \leq  \| \psi\|_{X^s_T} \leq R $, we get     $\big\| |\psi|^2 \psi \big\|_{L^1_T\H^s} \leq c T^{1/2}R^3.$
 
  $\bullet$ Let us now prove that there exists $\kappa>0$ such that 
   \begin{equation}\label{claim1}
  \| K \psi \|_{L^2_T\H^s} \leq  c   T^{\kappa}\| K  \|_{\H^s}  \| \psi\|_{X^s_T}.
 \end{equation} 
In the case $s=0$ we simply write  $  \| K \psi \|_{L^2_TL^2} \leq   \| K  \|_{L^2} \|  \psi \|_{L^2_TL^\infty}   \leq T^{1/2}  \| K  \|_{L^2}  \| \psi\|_{X^0_T}$. Assume now $s>0$. By  \eqref{leibniz},
\begin{equation*}
  \| K \psi \|_{\H^s}  \leq c   \| K  \|_{\H^s} \|  \psi \|_{L^\infty} + c  \| K  \|_{L^p} \|  \psi \|_{\W^{s,q}} 
\end{equation*}
for all $2<p,q<\infty$ such that $1/p+1/q=1/2$. Then, by Sobolev, if $p>2$ is small enough, $ \| K  \|_{L^p} \leq c  \| K  \|_{\H^s}$. Finally using that $ \|  \psi \|_{L^2_T\W^{s,q}}  \leq  c\| \psi\|_{X^s_T}  $, we obtain \eqref{claim1}.\medskip

Putting the previous estimates together we have
    \begin{equation*}
 \| \Phi(\psi)\|_{X^s_T}
  \leq c \|\psi_0\|_{\H^s}+cT^{1/2} R^3 +c   T^{\kappa} \|u\|_{L^2_T}\| K  \|_{\H^s}  R .
 \end{equation*} 
  We now choose $R =2c \|\psi_0\|_{\H^s}$. Then for $T>0$ small enough, $\Phi$ maps $B_{s,T,R}$ into itself. With similar estimates we can show that $\Phi$ is a contraction in $B_{s,T,R}$, namely 
    \begin{equation*}
 \| \Phi(\psi_1)- \Phi(\psi_2)\|_{X^s_T}  
 \leq  \big[cT^{1/2} R^2 + cT^{\kappa} \|u\|_{L^2_T}\| K  \|_{\H^s} \big]  \| \psi_1- \psi_2\|_{X^s_T}.   
  \end{equation*} 
 As a conclusion there exists a unique fixed point to $\Phi$, which is a local solution to \eqref{nls-proof}. \medskip

  {\it Proof of the bound \eqref{norminf-s} for $s=0$:} Before we turn to the proof of the global existence, we prove this particular case of \eqref{norminf-s}.  The case $\psi_0 \equiv 0$ is trivial, therefore in the sequel we assume $\psi_0 \not \equiv 0$. Assume that one can solve \eqref{nls-proof} on $[0, T^{\star})$, and let $T<T^{\star}$.  Clearly, $ \|  \psi(t) \|_{L^2} = \|  \psi_0\|_{L^2}$ for all $0\leq t \leq T$. Let $0\leq t_0< T$ and $\delta >0$ such that $  t_0 +\delta \leq  T$. We have for all $0\leq t \leq \delta$
  \begin{equation*} 
 \psi(t+t_0)=e^{i t H}\psi(t_0)+i \int_{t_0}^{t_0+t} e^{i(t_0+t-\tau) H}(|\psi|^2 \psi)d\tau-i \sigma\int_{t_0}^{t_0+t} u(\tau) e^{i(t_0+t-\tau) H}(K \psi)d\tau,
 \end{equation*} 
which implies, by the Strichartz estimates \eqref{Stri} and \eqref{Stri1}
  \begin{eqnarray*} 
  \|  \psi\|_{L^4([t_0,t_0+\delta]; L^\infty)}  &\leq&c \|\psi\|_{L^{\infty}_TL^2}+c \| \psi \|^2_{L_T^{\infty}L^2}\| \psi \|_{L^{4/3}([t_0,t_0+\delta]; L^{\infty})} +c\|K\|_{L^2} \| \psi \|_{L_T^{\infty}L^2} \|u\|_{L^{4/3}([t_0,t_0+\delta])}  \\
   &\leq&c \|\psi\|_{L^{\infty}_TL^2}+c\delta^{2/3} \| \psi \|^2_{L_T^{\infty}L^2}\| \psi \|_{L^{4}([t_0,t_0+\delta]; L^{\infty})}   +cT^{1/4}\|K\|_{L^2} \| \psi \|_{L_T^{\infty}L^2} \|u\|_{L_T^{2}} \\
   &\leq&c \|\psi_0\|_{L^2}+c\delta^{2/3} \| \psi_0 \|^2_{L^2}\| \psi \|_{L^{4}([t_0,t_0+\delta]; L^{\infty})}   +cT^{1/4}\|K\|_{L^2} \| \psi_0 \|_{L^2} \|u\|_{L_T^{2}} .
 \end{eqnarray*} 
  We pick $\delta=\delta(T)>0$ such that $c\delta^{2/3} \| \psi_0 \|^2_{L^2} =1/2$ (using here that $\psi_0 \not \equiv 0$), thus the previous estimate gives
   \begin{equation*} 
  \|  \psi\|_{L^4([t_0,t_0+\delta]; L^\infty)}   \leq 2c \|\psi_0\|_{L^2}(1+  \|u\|_{L_T^{2}}   ).
   \end{equation*} 
   We write this estimate for $t_0=0, \delta, \dots, j \delta$ with $j \in \N$ such that $j \delta < T< (j+1)\delta$. We sum up and we obtain
      \begin{equation} \label{L4}
 \|  \psi \|_{L^4_T L^{\infty}}  \leq  C\big(  T,  \|\psi_0\|_{L^2} ,   \|u\|_{L_T^{2}}\big).
   \end{equation} \medskip
   
     {\it Global existence:}   Thanks to \eqref{temps-u} and \eqref{L4}, the time of existence given in the local theory only depends on   $\|\psi_0\|_{L^2}$ and    $\|u\|_{L_T^{2}}$, thus the local argument can be iterated. As a conclusion, the problem \eqref{nls-proof} is globally well-posed and one has the bound
        \begin{equation*} 
 \|  \psi \|_{L^{\infty}( [-T,T]; \H^{s}(\R))} +  \|  \psi \|_{L^4( [-T,T]; \W^{s,\infty}(\R))}  \leq  C\big(  T,  \|\psi_0\|_{\H^s(\R)} ,   \|u\|_{L^2([-T,T])}\big).
   \end{equation*} \medskip

{\it The compactness argument:} Let $u_n \rightharpoonup u$ weakly  in $L^2([0,T]; \R)$. Notice in particular that $\|u_n \|_{L^2_T} \leq C(T)$ for some $C(T)>0$. We have
  \begin{equation*} 
 \psi(t)=e^{it H}\psi_0-i \int_0^t u(\tau) e^{i(t-\tau) H}( K \psi(\tau))d\tau+i \sigma \int_0^t e^{i(t-\tau) H}(|\psi|^2 \psi)d\tau,
 \end{equation*} 
and 
  \begin{equation*} 
 \psi_n(t)=e^{it H}\psi_0-i \int_0^t u_n(\tau) e^{i(t-\tau) H}( K \psi_n(\tau))d\tau+i \sigma \int_0^t e^{i(t-\tau) H}(|\psi_n|^2 \psi_n)d\tau.
 \end{equation*} 
We set $z_n=\psi-\psi_n$, then $z_n$ satisfies 
\begin{equation}\label{zn}
z_n= \mathcal{L}(\psi,\psi_n)+\mathcal{N}(\psi,\psi_n),
\end{equation}
with 
\begin{equation*}
\mathcal{L}(\psi,\psi_n)= -i   \int_0^t \big(u(\tau)-u_n(\tau) \big) e^{i(t-\tau) H}( K \psi)d\tau  -i \int_0^t  u_n(\tau)  e^{i(t-\tau) H}\big( K (\psi-\psi_n)\big)d\tau
\end{equation*}
and 
\begin{equation*}
\mathcal{N}(\psi,\psi_n)= i \sigma \int_0^t   e^{i(t-\tau) H}\big(  (\psi-\psi_n)(\psi+\psi_n)  \ov{\psi} \big)d\tau +i \sigma \int_0^t   e^{i(t-\tau) H}\big(  (\ov{\psi}-\ov{\psi_n})\psi^2_n  \big)d\tau. 
\end{equation*}
Let us prove that  $z_n \longrightarrow 0$ in $L^{\infty}([0,T]; \H^s(\R))$.  To begin with, we state  an analogous result to \cite[Lemma~3.7]{BMS}.

\begin{lem}
Denote by 
  \begin{equation*} 
 \epsilon_n=\Big\|    \int_0^t \big(u_n(\tau) -u(\tau) \big)e^{i(t-\tau) H}( K \psi(\tau))d\tau \Big\|_{L^{\infty}_T\H^s(\R)}.
 \end{equation*} 
 Then $\eps_n \longrightarrow 0$, when $n \longrightarrow +\infty$, which completes the proof.
\end{lem}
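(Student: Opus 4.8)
The plan is to follow the strategy of \cite[Lemma~3.7]{BMS}: the linear map $v\longmapsto \int_0^t v(\tau)\,e^{-i\tau H}(K\psi(\tau))\,d\tau$ is compact from $L^2([0,T];\R)$ into $\H^s$, so it turns the weakly convergent sequence $u_n-u\rightharpoonup 0$ into a strongly convergent one, and in fact uniformly in $t$. First I would exploit that $e^{itH}$ is unitary on $\H^s$ (it commutes with $H^{s/2}$ and is unitary on $L^2$) to factor it out and reduce the claim to showing that $\sup_{t\in[0,T]}\|v_n(t)\|_{\H^s}\to 0$, where $v_n(t)=\int_0^t\big(u_n(\tau)-u(\tau)\big)\,G(\tau)\,d\tau$ and $G(\tau):=e^{-i\tau H}\big(K\psi(\tau)\big)$. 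Using unitarity once more together with the estimate \eqref{claim1}, one has $\|G\|_{L^2_T\H^s}=\|K\psi\|_{L^2_T\H^s}<\infty$, so that $G\in L^2([0,T];\H^s)$; this is the only input needed from the nonlinear analysis, and it makes the argument uniform in $s\geq 0$.

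The key step is a weak-to-strong upgrade via finite-dimensional truncation. Given $\eta>0$, I would approximate $G$ by a simple function $G_\eta=\sum_{j=1}^N c_j\1_{I_j}$, with $c_j\in\H^s$ and the $I_j$ disjoint subintervals of $[0,T]$, such that $\|G-G_\eta\|_{L^2_T\H^s}<\eta$ (density of simple functions in the Bochner space $L^2([0,T];\H^s)$). Splitting $v_n(t)$ accordingly, the tail $\int_0^t(u_n-u)(G-G_\eta)\,d\tau$ is bounded in $\H^s$ by $\|u_n-u\|_{L^2_T}\,\|G-G_\eta\|_{L^2_T\H^s}\leq C\eta$ by Cauchy--Schwarz, uniformly in $t$ and in $n$, where $\|u_n\|_{L^2_T}\leq C(T)$ is used. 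The finite-rank part equals $\sum_{j=1}^N c_j\int_{I_j\cap[0,t]}(u_n-u)\,d\tau$, and each scalar integral is a difference of values of the primitive $U_n(t):=\int_0^t\big(u_n(\tau)-u(\tau)\big)\,d\tau$.

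To control this part uniformly in $t$, I would note that $U_n(t)\to 0$ for each fixed $t$ (weak convergence tested against $\1_{[0,t]}\in L^2_T$) and that the family $(U_n)_n$ is equicontinuous, since $|U_n(t)-U_n(t')|\leq \|u_n-u\|_{L^2_T}\,|t-t'|^{1/2}\leq C\,|t-t'|^{1/2}$. By Arzel\`a--Ascoli on the compact interval $[0,T]$, $U_n\to 0$ uniformly, whence $\sup_{t}\big\|\sum_{j}c_j\int_{I_j\cap[0,t]}(u_n-u)\big\|_{\H^s}\leq 2\big(\sum_{j}\|c_j\|_{\H^s}\big)\|U_n\|_{L^\infty_T}\to 0$. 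Combining the two contributions gives $\limsup_{n\to\infty}\epsilon_n\leq C\eta$, and letting $\eta\to 0$ yields $\epsilon_n\to 0$.

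The main obstacle is precisely this upgrade from weak to strong, \emph{uniform-in-$t$}, convergence: weak convergence alone only gives $v_n(t)\rightharpoonup 0$ for each fixed $t$, which is insufficient both because one needs the $\H^s$-norm to vanish and because one needs it to vanish uniformly over $t\in[0,T]$. The finite-dimensional truncation, on which weak and strong convergence coincide, together with the equicontinuity of the primitives $U_n$, is what forces genuine norm convergence in the time-uniform topology $L^\infty_T\H^s$.
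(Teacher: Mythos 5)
Your proof is correct, but it takes a genuinely different route from the paper's. You argue directly: by writing $e^{i(t-\tau)H}=e^{itH}e^{-i\tau H}$ and using unitarity of $e^{itH}$ on $\H^s$, you strip the $t$-dependence out of the integrand, reduce to $v_n(t)=\int_0^t(u_n-u)G(\tau)\,d\tau$ with $G\in L^2([0,T];\H^s)$, approximate $G$ by interval-supported step functions, and obtain uniformity in $t$ from the equicontinuity of the primitives $U_n$ (pointwise convergence from weak convergence plus a uniform H\"older-$1/2$ bound, hence uniform convergence on $[0,T]$). The paper instead proceeds by contradiction with a sequence $t_n\to t$, splits the quantity into three pieces $\delta_n^1,\delta_n^2,\delta_n^3$, needs an auxiliary continuity lemma for $t\mapsto e^{itH}$ acting on $L^q_T\H^s$ (Lemma~\ref{lemL2}) to handle $\delta_n^1$, a H\"older-in-time estimate for $\delta_n^2$, and for the fixed-time piece $\delta_n^3$ performs a finite-rank truncation in the \emph{spatial} Hermite basis rather than in time. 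Both arguments rest on the same quantitative input, namely $K\psi\in L^2_T\H^s$, and both hinge on a finite-rank approximation on which weak convergence upgrades to strong. Your version is more self-contained (no contradiction setup, no separate propagator-continuity lemma) and isolates cleanly where the uniformity in $t$ comes from; the paper's version is phrased to match the sequential compactness argument used afterwards for the attainable set, where sequences $t_j\to t$ arise naturally, and its spectral truncation reuses the Hermite expansion already needed for Lemma~\ref{lemL2}. The only cosmetic point in your write-up is the appeal to Arzel\`a--Ascoli: strictly one should add that every uniform limit point of $(U_n)$ is $0$ by the pointwise convergence (or run the standard $\epsilon/3$ argument over a finite net), but this is immediate.
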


\begin{proof}
We proceed by contradiction. Assume that there exists $\eps>0$, a subsequence of $u_n$ (still denoted by $u_n$) and a sequence $t_n \longrightarrow t \in [0,T]$ such that 
  \begin{equation} \label{contra}
 \Big\|    \int_0^{t_n} \big(u_n(\tau) -u(\tau) \big)e^{i(t_n-\tau) H}( K \psi(\tau))d\tau \Big\|_{\H^s(\R)} \geq \eps.
 \end{equation}
   Let us decompose 
 \begin{equation*}
  \Big\|  \int_0^{t_n} \big(u_n(\tau) -u(\tau) \big)e^{i(t_n-\tau) H}( K \psi(\tau))d\tau\Big\|_{\H^s(\R)}\leq \delta^1_n+ \delta^2_n+ \delta^3_n ,\end{equation*}
 with 
  \begin{equation*}
\delta^1_n= 
  \Big\|   \int_0^{t_n} \big(u_n(\tau) -u(\tau) \big) \big(   e^{i(t_n-\tau) H}- e^{i(t-\tau) H}\big)   ( K \psi(\tau))d\tau\Big\|_{\H^s(\R)},
 \end{equation*}
   \begin{equation*}
\delta^2_n= 
  \Big\|    \int_{t_n}^t \big(u_n(\tau) -u(\tau) \big)   e^{i(t-\tau) H}   ( K \psi(\tau))\Big\|_{\H^s(\R)},
 \end{equation*}
    \begin{equation*}
\delta^3_n= 
  \Big\|    \int_{0}^t \big(u_n(\tau) -u(\tau) \big)   e^{i(t-\tau) H}   ( K \psi(\tau))\Big\|_{\H^s(\R)},
 \end{equation*}
 and we will show that each of the previous terms tends to 0. This will give a contradiction to \eqref{contra}.
 
Up to a subsequence, we can assume that  for all $n\geq 1$, $t_n \leq t$ or   $t_n \geq t$. We only consider the first case, since the second is similar. By the Minkowski inequality and the unitarity of $e^{i\tau H}$
   \begin{eqnarray*} 
  \delta^1_n   &  \leq  &  \int_0^{t_n} \big|u_n(\tau) -u(\tau) \big|  \Big\|  \big(   e^{i(t_n-\tau) H}- e^{i(t-\tau) H}\big)   ( K \psi(\tau)) \Big\|_{\H^s(\R)} d\tau\\
      &  =&    \int_0^{t_n} \big|u_n(\tau) -u(\tau) \big|  \Big\|  \big(   e^{i t_n H}- e^{i t H}\big)   ( K \psi(\tau)) \Big\|_{\H^s(\R)}d\tau. 
           \end{eqnarray*} 
Then by Cauchy-Schwarz
   \begin{equation*} 
 \delta^1_n    \leq   \big\|u_n -u \big\|_{L^2_T}  \Big\|  \big(   e^{i t_n H}- e^{i t H}\big)   ( K \psi(\tau)) \Big\|_{L^2_{\tau \in [0,T]}\H^s(\R)}.
           \end{equation*} 
Now, using      \eqref{norminf-s}, observe that 
   \begin{equation}\label{borneL2}
   \| K \psi\|_{L^2_T\H^s(\R)} \leq \| K \|_{\H^s(\R)} \|  \psi\|_{L^2_T\W^{s,\infty}(\R)} <\infty.
    \end{equation}
 Hence Lemma~\ref{lemL2} below (with $d=1$ and $q=2$) applies to conclude, with the previous lines, that $   \delta^1_n   \longrightarrow 0$ when $n \longrightarrow +\infty$.  \medskip

By the Minkowski inequality, the unitarity of $e^{i\tau H}$ and the H\"older inequality
  \begin{eqnarray*}  
    \delta^2_n   &     \leq   &    \int_{t_n}^t \big|u_n(\tau) -u(\tau) \big|     \big\|   K \psi(\tau) \big\|_{\H^s(\R)}d\tau \\
  &   \leq   &     \|u_n -u \|_{L^{4/3}_{\tau \in [t_n,t]}}  \| K \psi\|_{L^4_T\H^s(\R)} \\
            &   \leq   &    |t-t_n|^{1/4} \|u_n -u \|_{L^{2}_{T}}  \| K  \|_{\H^s(\R)} \|  \psi\|_{L^4_T\W^{s,\infty}(\R)}, \label{PrT}
            \end{eqnarray*}    
where we used that $\|  \psi\|_{L^4_T\W^{s,\infty}(\R)} <\infty$ by \eqref{norminf-s}. Then,  $   \delta^2_n   \longrightarrow 0$ when $n \longrightarrow +\infty$.  \medskip

Let us now prove that $   \delta^3_n   \longrightarrow 0$ when $n \longrightarrow +\infty$.  We set $v(\tau)= e^{i(t-\tau) H}( K \psi(\tau))$. Then by~\eqref{borneL2}, $v \in L^2([0,T]; \H^s(\R))$. We expand $v$ on a Hilbertian basis $(h_k)_{k\geq 0}$ of $L^2(\R)$ (the Hermite functions for instance),
   \begin{equation*} 
v(\tau,x)=\sum_{k=0}^{+\infty} \alpha_k(\tau) h_k(x),
 \end{equation*}
 so that we have $\dis \|v(\tau, \cdot) \|^2_{\H^s}=\sum_{k=0}^{+\infty} (2k+1)^s|\alpha_k(\tau)|^2$.
 
  Let $\eta>0$, then  there exists $M>0$ large enough such that the function $g(\tau,x)=\sum_{k=0}^{M} \alpha_k(\tau) h_k(x)$ satisfies $\| v-g\|_{L^2([0,T] ; \H^s(\R))} \leq \eta/(4 \rho)$ where $\rho= \sup_{n \geq 0} \|u_n -u \|_{L^{2}_{T}} $. 

We have
   \begin{equation*} 
 \int_0^t \big(u_n(\tau) -u(\tau) \big)g(\tau)d\tau =\sum_{k=0}^{M}   h_k\int_0^t \big(u_n(\tau) -u(\tau) \big) \alpha_k(\tau)d\tau,
 \end{equation*}
 thus 
    \begin{equation*} 
\big \|\int_0^t \big(u_n(\tau) -u(\tau) \big)g(\tau)d\tau \big\|^2_{\H^s(\R)}=\sum_{k=0}^{M}  (2k+1)^s\Big|\int_0^t \big(u_n(\tau) -u(\tau) \big) \alpha_k(\tau)d\tau\Big|^2 \longrightarrow 0,
 \end{equation*}
 by the weak convergence of $(u_n)$. Finally, from the previous line, we deduce that for $n$ large enough,
   \begin{eqnarray*} 
  \delta^3_n= \Big\|   \int_0^t \big(u_n(\tau) -u(\tau) \big)v(\tau)d\tau \Big\|_{\H^s(\R)} &\leq& \frac{\eta}{4\rho}  \big\|u_n -u \big\|_{L^{2}_{T}} +  \Big\|   \int_0^t \big(u_n(\tau) -u(\tau) \big)g(\tau)d\tau \Big\|_{\H^s(\R)}\nonumber \\
 &\leq& \frac{\eta}2.
 \end{eqnarray*}
  In other words, $   \delta^3_n   \longrightarrow 0$ when $n \longrightarrow +\infty$. 
\end{proof}

We state a convergence result (slightly more general than what we need here)

\begin{lem}\label{lemL2}
Let $d\geq 1$, $2 \leq q <\infty$ and $s\geq 0$. Assume that $F \in L^q([0,T] ; \H^s(\R^d))$ and $t_n \longrightarrow t$. Then, when $n \longrightarrow +\infty$,
$$ \big\|  \big(   e^{i t_n H}- e^{i t H}\big)   F(\tau,x) \big\|_{L^q_{\tau \in [0,T]}\H^s(\R^d)}  \longrightarrow 0.    $$
\end{lem}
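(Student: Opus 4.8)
The plan is to reduce the statement to pointwise (in time) convergence of the integrand and then invoke the dominated convergence theorem. The structural fact I would rely on is that $\{e^{itH}\}_{t\in\R}$ is a strongly continuous unitary group on $\H^s(\R^d)$. Indeed, since $H$ is self-adjoint, $e^{itH}$ commutes with $H^{s/2}$ by functional calculus, whence
\[
\|e^{itH}f\|_{\H^s}=\|H^{s/2}e^{itH}f\|_{L^2}=\|e^{itH}H^{s/2}f\|_{L^2}=\|H^{s/2}f\|_{L^2}=\|f\|_{\H^s},
\]
so $e^{itH}$ is an isometry of $\H^s$. For fixed $f\in\H^s$ one has likewise $\|(e^{it_nH}-e^{itH})f\|_{\H^s}=\|(e^{it_nH}-e^{itH})H^{s/2}f\|_{L^2}\to 0$ as $t_n\to t$, by strong continuity of the group on $L^2$ (Stone's theorem) applied to $H^{s/2}f\in L^2$.

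First I would observe that, since $F\in L^q([0,T];\H^s)$, we have $F(\tau,\cdot)\in\H^s$ for almost every $\tau\in[0,T]$. For each such $\tau$, the strong continuity just recalled gives
\[
\big\|(e^{it_nH}-e^{itH})F(\tau,\cdot)\big\|_{\H^s}\longrightarrow 0\qquad\text{as }n\to+\infty,
\]
that is, the integrand converges to $0$ pointwise almost everywhere in $\tau$.

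Next I would produce an $n$-independent integrable majorant. Using that $e^{it_nH}$ and $e^{itH}$ are isometries of $\H^s$, the triangle inequality yields, for every admissible $\tau$,
\[
\big\|(e^{it_nH}-e^{itH})F(\tau,\cdot)\big\|_{\H^s}^q\leq\big(2\,\|F(\tau,\cdot)\|_{\H^s}\big)^q=2^q\,\|F(\tau,\cdot)\|_{\H^s}^q,
\]
and the right-hand side is integrable over $[0,T]$ precisely because $F\in L^q([0,T];\H^s)$. The dominated convergence theorem then gives
\[
\int_0^T\big\|(e^{it_nH}-e^{itH})F(\tau,\cdot)\big\|_{\H^s}^q\,d\tau\longrightarrow 0,
\]
which is exactly the asserted convergence of the $L^q_\tau\H^s$ norm.

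I do not expect a genuine obstacle in this argument; the only points demanding care are the isometry and strong-continuity properties of $e^{itH}$ on $\H^s$ (both consequences of the fact that $H^{s/2}$ commutes with the group and of Stone's theorem on $L^2$), together with the measurability of $\tau\mapsto\|(e^{it_nH}-e^{itH})F(\tau,\cdot)\|_{\H^s}$ needed to apply dominated convergence, which is standard for the Bochner-measurable map $F$.
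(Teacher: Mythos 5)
Your proof is correct and is essentially the paper's argument: both establish almost-everywhere-in-$\tau$ convergence of $\|(e^{it_nH}-e^{itH})F(\tau,\cdot)\|_{\H^s}$ to zero and then apply dominated convergence with the majorant $2\|F(\tau,\cdot)\|_{\H^s}\in L^q([0,T])$ coming from the isometry of the group on $\H^s$. The only cosmetic difference is that you invoke Stone's theorem abstractly for the strong continuity of $e^{itH}$ on $L^2$, while the paper verifies it concretely by expanding $F$ on the Hermite basis and writing $\|(e^{it_nH}-1)F(\tau,\cdot)\|_{L^2}^2=\sum_k|e^{i(2k+1)t_n}-1|^2|\alpha_k(\tau)|^2$.
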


\begin{proof}
By unitarity of $e^{i\tau H}$, we can assume that $t=0$. Then, up to replacing $F$ by $H^{s/2}F$, it is enough to prove the result for $s=0$. We expand $F$ on the Hilbertian basis $(h_k)_{k\geq 0}$ of $L^2(\R^d)$ given by the Hermite functions: $\dis F(\tau,x) =\sum_{k=0}^{+\infty} \alpha_k(\tau) h_k(x)$. Thus
  \begin{equation}\label{borne-leb}
  \|F\|^q_{L^q_T L^2(\R^d)}=   \int_{0}^T\Big(\sum_{k=0}^{+\infty}   |\alpha_k(\tau)|^2\Big)^{q/2}d \tau <\infty.
     \end{equation}
We can write
  \begin{equation*}
  e^{it_n H}F(\tau,x) =\sum_{k=0}^{+\infty} \alpha_k(\tau) e^{i(2k+1)t_n}h_k(x),
   \end{equation*}
which gives
$$ \big\|  \big(   e^{i t_n H}-  1\big)   F(\tau,x) \big\|^2_{L^2(\R^d)}=  \sum_{k=0}^{+\infty} |e^{i(2k+1)t_n}-1|^2  |\alpha_k(\tau)|^2 ,  $$
and we conclude with the Lebesgue convergence theorem thanks to the bound
  \begin{equation*}
 \big\|  \big(   e^{i t_n H}-  1\big)   F(\tau,x) \big\|_{L^2(\R^d)} \leq 2 \Big(\sum_{k=0}^{+\infty}   |\alpha_k(\tau)|^2\Big)^{1/2} \in L^q([0,T]),
   \end{equation*}
   by \eqref{borne-leb}.
\end{proof}

By Lemma \ref{lem-prod-2}
\begin{eqnarray}
\| \mathcal{N}(\psi,\psi_n)(t) \|_{\H^s(\R)} &\leq & \int_0^t   \|   (\psi-\psi_n)(\psi+\psi_n)  \ov{\psi}  \|_{\H^s(\R)} d\tau +\int_0^t   \|  (\ov{\psi}-\ov{\psi_n})\psi^2_n  \|_{\H^s(\R)} d\tau \nonumber\\
&\leq &c \int_0^t   \|  z_n  \|_{\H^s(\R)} \big(  \|  \psi  \|^2_{\H^s(\R)  \cap \W^{s,\infty}(\R)}  + \|  \psi_n  \|^2_{\H^s(\R)  \cap \W^{s,\infty}(\R)} \big)d\tau.\label{prodw} 
 \end{eqnarray}
To simplify the exposition, we write $\mathcal{Y}^s(\R)=\H^s(\R)  \cap \W^{s,\infty}(\R)$ in the next lines. Thus, by~\eqref{zn}, \eqref{prodw}, and the inhomogeneous Strichartz estimate \eqref{Stri1} (with    $q$ and $r$ to be fixed later), for all $0 \leq t \leq T$
  \begin{equation*} 
 \|z_n(t)\|_{\H^s(\R)} \leq \eps_n +c\big\|    u_n  K z_n \big\|_{L^{q'}_t\W^{s,r'}(\R)} +c \int_0^t   \|  z_n  \|_{\H^s(\R)} \big(  \|  \psi  \|^2_{\mathcal{Y}^s(\R)} + \|  \psi_n  \|^2_{\mathcal{Y}^s(\R)}  \big)d\tau.
 \end{equation*} 
 Then by the Gr\"onwall lemma, for all $0 \leq t \leq T$ and \eqref{norminf-s}
   \begin{eqnarray} 
 \|z_n(t)\|_{\H^s(\R)} &\leq& \big(\eps_n +c\big\|    u_n  K z_n \big\|_{L^{q'}_t\W^{s,r'}(\R)}\big) \e^{ c \int_0^t    \big(  \|  \psi  \|^2_{\mathcal{Y}^s(\R)} + \|  \psi_n  \|^2_{\mathcal{Y}^s(\R)}  \big)d\tau} \nonumber\\
 &\leq&C_1(T) \big(\eps_n +c\big\|    u_n  K z_n \big\|_{L^{q'}_t\W^{s,r'}(\R)}\big) . \label{gronwall1}
 \end{eqnarray} 
 Now we claim that 
   \begin{equation}\label{claim-borne}
 \big\|    K z_n \big\|_{\W^{s,r'}(\R)} \leq c  \|K\|_{\H^s(\R)}  \|z_n\|_{\H^s(\R)} ,
   \end{equation}
    if $r$ is large enough.
 
 If $s=0$ we choose $r=\infty$ and we clearly have $\big\|    K z_n \big\|_{L^1(\R)} \leq   \|K\|_{L^2(\R)}  \|z_n\|_{L^2(\R)} $.
 
 If $s>0$, by \eqref{leibniz} we have
   \begin{equation*} 
 \big\|    K z_n \big\|_{\W^{s,r'}(\R)} \leq c  \|K\|_{\H^s(\R)}  \|z_n\|_{L^{q}(\R)} + c  \|z_n\|_{\H^s(\R)}  \|K\|_{L^{q}(\R)},
 \end{equation*} 
 with $q>2$ such that $1/2+1/q=1/r'$. Now, if $r<\infty$ is large enough, then $q>2$ is close to 2, and by the Sobolev inequality $ \|K\|_{L^{q}(\R)} \leq c \|K\|_{\H^{s}(\R)} $ and  $ \|z_n\|_{L^{q}(\R)} \leq c \|z_n\|_{\H^{s}(\R)} $, hence \eqref{claim-borne}. \medskip
 
 We come back to \eqref{gronwall1} and by \eqref{claim-borne} we get
    \begin{equation*} 
 \|z_n(t)\|_{\H^s(\R)} \leq  C_1(T) \Big(\eps_n +  \|K\|_{\H^s(\R)}  \Big(    \int_0^t |u_n(\tau)|^{q'}   \| z_n(\tau)\|^{q'}_{\H^s(\R)} d\tau \Big)^{1/{q'}}  \Big),
 \end{equation*}   
for some $1<q'<2$. Then there exists $C_2(T)>0$ such that 
  \begin{equation*} 
 \|z_n(t)\|^{q'}_{\H^s(\R)}  \leq  C_2(T) \Big(\eps^{q'}_n +    \|K\|^{q'}_{\H^s(\R)}      \int_0^t |u_n(\tau)|^{q'}   \| z_n(\tau)\|^{q'}_{\H^s(\R)}  d\tau\Big),
 \end{equation*} 
and by the Gr\"onwall lemma we get,   for all $0 \leq t \leq T$
  \begin{equation*} 
 \|z_n(t)\|_{\H^s(\R)}  \leq  C_3(T)\eps_n e^{ C_3(T)   \|K\|^{q'}_{\H^s(\R)}      \int_0^t |u_n(\tau)|^{q'} d\tau }
 \end{equation*} 
which in turn implies 
  \begin{equation*} 
 \|z_n\|_{L^{\infty}_T\H^s(\R)}  \leq  C_3(T)\eps_n e^{ C_3(T)   \|K\|^{q'}_{\H^s(\R)}      \int_0^T |u_n(\tau)|^{q'} d\tau }  \leq C_4(T) \eps_n,
 \end{equation*} 
and this latter term tends to 0, which concludes the proof. \medskip

  We now prove the last statement of Theorem~\ref{thm1.2} (the proof of Theorem~\ref{thm1.1} is similar). For fixed $\psi_0 \in \H^s(\R)$ we set 
$$K_{T,M}=\bigcup_{|t|\leq T}\bigcup_{\|u\|_{L^2(0,T)}\leq M} \big\{\psi(t)\big\}.$$ 
Let $\psi(t_j,u_j) \subset K_{T,M} $. By the reflexivity of $L^2(0,T)$, up to a subsequence, $t_j \longrightarrow t$ and $u_j  \rightharpoonup u$ weakly  in $L^2([0,T]; \R)$. Then by the previous proof, $\psi(t_j,u_j) \longrightarrow  \psi(t,u)$ in $\H^s(\R)$. As a result $K_{T,M}$ is compact in $\H^s(\R)$ and finally we can write 
$$
\bigcup_{t \in \R} \;\bigcup_{ u \in L^2_{loc}(\R;\R)} \big\{ \psi(t)\big\}= \bigcup_{T \in \N}   \bigcup_{M \in \N}    K_{T,M},
$$
as a union of compact sets.

\section{The non-linear Schr\"odinger equation in dimension $d=3$}

             \subsection{Proof of Proposition~\ref{prop1.3}}
  We first prove $(i)$.
  
 {\it Local existence:} We consider the map 
  \begin{equation}\label{duha}
 \Phi(\psi)(t)=e^{it H}\psi_0+i \int_0^t e^{i(t-s) H}(|\psi|^2 \psi)ds-i \int_0^t u(s) e^{i(t-s) H}( K \psi)ds,
 \end{equation} 
and we will show that it is a contraction in some Banach space. Namely,  we define the Strichartz norm $\|\psi\|_{X^1_T}= \| \psi \|_{L^{\infty}_T\H^1}+\| \psi \|_{L^{2}_T\W^{1,6}}$ and the space 
  \begin{equation*}
B_{T,R}= \big\{ \|\psi\|_{X^1_T}\leq R\big\},
 \end{equation*} 
 with $R>0$ and $T>0$ to be fixed.
 
By the Strichartz estimates~\eqref{Stri},~\eqref{Stri1} and the Leibniz rule
   \begin{eqnarray}\label{borneH}
 \| \Phi(\psi)\|_{X^1_T} &\leq& c \|\psi_0\|_{\H^1}+c\int_0^T \big\| |\psi|^2 \psi \big\|_{\H^1}ds +c\int_0^T  |u(s)| \big\| K \psi \big\|_{\H^1}ds  \nonumber \\
 &\leq&c \|\psi_0\|_{\H^1}+c \big\| \psi \big\|_{L_T^{\infty}\H^1}\| \psi \|^2_{L^{2}_T L^{\infty}} +c\big(\int_0^T  |u(s)| ds \big)\big\| K  \big\|_{\W^{1,\infty}} \big\| \psi \big\|_{L_T^{\infty}\H^1}.
 \end{eqnarray} 
 We now show that  there exists $\kappa>0$ such that    $\| \psi \|^2_{L^{2}_T L^{\infty}}  \leq T^{\kappa} \| \psi \|^2_{T} $. Let $0<\eps<1/2$, then the couple $(q_\eps,r_\eps)=(\frac4{1+2\eps},\frac3{1-\eps})$ is admissible and by the Sobolev inequality $\| \psi \|_{ L^{\infty}} \leq C \| \psi \|_{ \W^{1,r_\eps}} $. Then by the H\"older inequality, 
 $$  \| \psi \|^2_{L^{2}_T L^{\infty}}  \leq T^{\kappa}  \| \psi \|^2_{ L^{q_\eps}_T\W^{1,r_\eps}} \leq c T^{\kappa}   \| \psi \|^2_{X^1_T} ,$$
 for some $\kappa>0$. Thus
    \begin{equation*}
 \| \Phi(\psi)\|_{X^1_T}
  \leq c \|\psi_0\|_{\H^1}+cT^{\kappa} R^3 +cR\big(\int_0^T  |u(s)| ds \big)\big\| K  \big\|_{\W^{1,\infty}} .
 \end{equation*} 
 
 We now choose $R =4c \|\psi_0\|_{\H^1}$. Then we fix  $T_1=c_1 R^{-2/\kappa}$ with $c_1>0$ small enough such that $cT_1^{1/2} R^2 \leq 1/4$ and we fix $T_2>0$ such that  $c\int_0^{T_2}  |u(s)| ds \leq \big\| K \big\|^{-1}_{\W^{1,\infty}}/4$. Therefore, for $T=\min{(T_1,T_2)}$, $\Phi$ maps $B_{T,R}$ into itself. With similar estimates we can show that $\Phi$ is a contraction in $B_{T,R}$, namely 
    \begin{equation*}
 \| \Phi(\psi_1)- \Phi(\psi_2)\|_{X^1_T}  
 \leq  \big[cT^{\kappa} R^2 + c\big(\int_0^T  |u(s)| ds \big)\big\| K  \big\|_{\W^{1,\infty}} \big]  \| \psi_1- \psi_2\|_{X^1_T}.   
  \end{equation*} 
   \medskip

  {\it Energy bound:}   We define 
   \begin{equation*} 
       E(t)=\int_{\R^3}\big(\ov{\psi} H\psi+|\psi|^2 +\frac12|\psi|^4\big)dx= \int_{\R^3}\big( |\nabla \psi|^2 +|x|^2|\psi|^2+|\psi|^2 +\frac12|\psi|^4\big)dx.
       \end{equation*}
 Then, using that $  \partial_t \ov{\psi} =-i( H \ov{ \psi} + |\psi|^2 \ov{ \psi})+i u(t) K(x) \ov{ \psi}$, we get 
    \begin{eqnarray*} 
       E'(t) &=   & 2 \Re \int_{\R^3} \partial_t\ov{ \psi}\big(\psi+ H \psi + |\psi|^2 \psi  \big)dx \nonumber \\
       &=& -2 u(t) \Im \int_{\R^3}  K \ov{ \psi }H \psi dx \nonumber \\
              &=& 2 u(t) \Im \int_{\R^3}  \ov{ \psi }  \nabla K \cdot \nabla  \psi    dx.
                  \end{eqnarray*}
 Now we use the assumption $\nabla K \in L^\infty(\R^3)$ to get 
  \begin{equation*} 
  E'(t)  \leq C |u(t)| \|\psi\|_{L^2}  \| \nabla\psi\|_{L^2}  \leq C |u(t)| \|\psi_0\|_{L^2} E^{1/2}(t)    ,   
       \end{equation*}
  which, by integration,  implies 
   \begin{equation}\label{boundEn}
   E(t) \leq \big(E^{1/2}(0) +2C \|\psi_0\|_{L^2}  \int_0^t |u(s)|ds\big)^2.
      \end{equation}
Notice that thanks to the Sobolev inequality,   $\| \psi \|_{L^4(\R^3)} \leq C \| \psi \|_{\H^1(\R^3)}$, therefore $ E(0) \leq C(\| \psi_0 \|_{\H^1(\R^3)})$.
    \medskip
  
    {\it Global existence:} Assume that one can solve~\eqref{NLS} on $[0,T^{\star})$. By~\eqref{boundEn}, there is a   time $T_1^{\star}>0$ such that $c(T_1^{\star})^{\kappa} (R^{\star})^2 \leq 1/4$ with $R^{\star} =4c \|\psi\|_{L^{\infty}_{T^{\star}}\H^1}$. Then we fix $T^{\star}_2>0$ with  
    $$\dis {c\Big( \int_{T^{\star}-\frac{T_2^{\star}}{2}}^{T^{\star}+\frac{T_2^{\star}}{2}}  |u(s)| ds \Big) \big\| K  \big\|_{\W^{1,\infty}}  \leq 1/4}  .$$
      As a consequence, with the arguments of the local theory step, we are able to solve the equation~\eqref{NLS}, with an initial condition at $t=T^{\star}-{\min(T_1^{\star},T_2^{\star})}/{2}$, on the  time interval $[T^{\star}-{\min(T_1^{\star},T_2^{\star})}/{2},T^{\star}+{\min(T_1^{\star},T_2^{\star})}/{2}]$. This shows that the maximal solution is global in time.  \medskip
    
        {\it Proof of $(ii)$:}  
Let $0\leq \tau< T$ and $\delta >0$ such that $  \tau +\delta \leq  T$. By the Gagliardo-Nirenberg and Sobolev inequalities on $\R^3$, 
$$   \|  \psi\|_{ L^{\infty}} \leq C \|  \psi\|^{\frac12}_{  L^6}      \|  \psi\|^{\frac12}_{ \W^{1,6}} \leq C     \|  \psi\|^{\frac12}_{  \H^1}      \|  \psi\|^{\frac12}_{ \W^{1,6}}   , $$
then by the H\"older inequality
  \begin{equation}\label{holder}
  \|  \psi\|^2_{L^2([\tau,\tau+\delta]; L^{\infty})}   \leq C \delta^{1/2}    \|  \psi\|_{L^{\infty}_T \H^1}       \|  \psi\|_{L^2([\tau,\tau+\delta]; \W^{1,6})}.
 \end{equation} 
We have for all $0\leq t \leq \delta$
  \begin{equation*} 
 \psi(t+\tau)=e^{i t H}\psi(\tau)+i \int_\tau^{\tau+t} e^{i(\tau+t-s) H}(|\psi|^2 \psi)ds-i \int_\tau^{\tau+t} u(s) e^{i(\tau+t-s) H}(K \psi)ds,
 \end{equation*} 
which implies, using the same arguments  as in  \eqref{borneH}, that 
  \begin{multline*} 
  \|  \psi\|_{L^2([\tau,\tau+\delta]; \W^{1,6})}  \leq \\
    \begin{aligned}
  &\leq c \|\psi\|_{L^{\infty}_T\H^1}+c \big\| \psi \big\|_{L_T^{\infty}\H^1}\| \psi \|^2_{L^2([\tau,\tau+\delta]; L^{\infty})} +c\|K\|_{\W^{1,\infty}} \big\| \psi \big\|_{L_T^{\infty}\H^1}\big(\int_\tau^{\tau+\delta}  |u(s)| ds \big)  \\
   &\leq c \|\psi\|_{L^{\infty}_T\H^1}+c \delta^{1/2}  \big\| \psi \big\|^2_{L_T^{\infty}\H^1}   \|  \psi\|_{L^2([\tau,\tau+\delta]; \W^{1,6})}  +c  \|\psi\|_{L^{\infty}_T\H^1}  \int_0^T  |u(s)| ds,
       \end{aligned}
 \end{multline*} 
  where we used \eqref{holder}.  We pick $\delta=\delta(T)>0$ such that $c \delta^{1/2}  \big\| \psi \big\|^2_{L_T^{\infty}\H^1}  =\frac12$, thus the previous estimate gives
   \begin{equation}\label{estw16}
  \|  \psi\|_{L^2([\tau,\tau+\delta]; \W^{1,6})}  \leq 2c \|\psi\|_{L^{\infty}_T\H^1}(1+  \int_0^T  |u(s)| ds   ).
   \end{equation} 
   We write this estimate for $\tau=0, \delta, \dots, j \delta$ with $j \in \N$ such that $j \delta < T< (j+1)\delta$. We sum up and combine with \eqref{bound1}, which gives 
   \begin{equation*} 
 \|  \psi \|_{L^2_T \W^{1,6}}  \leq  C\big(  T,  \|\psi_0\|_{\H^1} ,    \int_0^T |u(s)|   ds\big),
   \end{equation*} 
which in turn implies \eqref{bound14}, thanks to \eqref{13}.\medskip

        {\it Proof of $(iii)$:}   Let $k \geq 1$,  and  let $\psi_0 \in \H^k(\R^3)$ and  $K \in \W^{k,\infty}(\R^3)$. Local existence in this case is proven as in the case $k=1$, thanks to a fixed point argument using the Strichartz norms $\|\psi\|_{X^k_T}= \| \psi \|_{L^{\infty}_T\H^k}+\| \psi \|_{L^{2}_T\W^{k,6}}$. The globalisation part is obtained as previously, since the local time of existence only depends on the energy norm and on $u$.
    
     Let us check the bound \eqref{bound}. Let $T>0$. Since $\psi$ is a fixed point in \eqref{duha}, we get for all $t \leq T$
      \begin{eqnarray*}
 \| \psi(t)\|_{\H^k} &\leq& c \|\psi_0\|_{\H^k}+c\int_0^t \big\| |\psi|^2 \psi \big\|_{\H^k}ds +c\int_0^t  |u(s)| \big\| K \psi \big\|_{\H^k}ds \\
 &\leq& c \|\psi_0\|_{\H^k}+c\int_0^t \big(\big\|   \psi(s) \big\|^2_{L^{\infty}}+|u(s)|\|K\|_{\W^{k,\infty}}\big) \| \psi(s)\|_{\H^k}ds,
 \end{eqnarray*} 
where in the previous line we used the Moser estimate \eqref{moser} to bound the non-linear term. 
Therefore, by the Gr\"onwall lemma, we get 
   \begin{eqnarray} \label{13}
  \| \psi(t)\|_{\H^k} &\leq & c \|\psi_0\|_{\H^k} \e^{ C\int_0^t \big(\|   \psi (s)\|^2_{L^{\infty}}+|u(s)| \big) ds   } \nonumber \\
  &\leq & C\big(  \|\psi_0\|_{\H^k} ,   \|  \psi \|_{L^2_TL^{\infty}} , \int_0^T |u(s)|   ds\big).
      \end{eqnarray}
 By the Sobolev inequality, from~\eqref{bound14} we deduce
  \begin{equation*} 
 \|  \psi \|_{L^2([0,T]; L^{\infty}(\R^3))}  \leq  C\|  \psi \|_{L^2([0,T];  \W^{1,6}(\R^3))} \leq C\big(  T,  \|\psi_0\|_{\H^1} ,    \int_0^T |u(s)|   ds\big),
  \end{equation*}       
which in turn, by \eqref{13}, implies \eqref{bound}.
     
     The estimate \eqref{bound15} can be obtained with similar arguments as for the special case $k=1$. We do not write the details.
     
   \subsection{Proof of Theorem~\ref{BMS_NLS}}

We adopt the strategy of Ball-Marsden-Slemrod \cite{BMS} combined with some non-linear estimates. Let $u_n \rightharpoonup  u$ weakly in $L^1([0,T]; \R)$ and fix $L \geq 0$ such that $\int_0^T|u_n(s)| ds \leq L$, $\int_0^T|u(s)| ds \leq L$. By definition of $\psi$ we have
  \begin{equation*} 
 \psi(t)=e^{it H}\psi_0-i \int_0^t u(s) e^{i(t-s) H}( K \psi)ds+i \int_0^t e^{i(t-s) H}(|\psi|^2 \psi)ds,
 \end{equation*} 
 and we define $\psi_n$
   \begin{equation*} 
 \psi_n(t)=e^{it H}\psi_0-i \int_0^t u_n(s) e^{i(t-s) H}( K \psi_n)ds+i \int_0^t e^{i(t-s) H}(|\psi_n|^2 \psi_n)ds.
 \end{equation*} 
Let us prove that $ \| \psi-\psi_n \|_{L^{\infty}_T\H^1} \longrightarrow 0$. Set $z_n=\psi-\psi_n$, then $z_n$ satisfies 
$$z_n= \mathcal{L}(\psi,\psi_n)+\mathcal{N}(\psi,\psi_n)$$
 with 
\begin{equation*}
\mathcal{L}(\psi,\psi_n)= -i \int_0^t \big(u(s)-u_n(s) \big) e^{i(t-s) H}( K \psi)ds  -i \int_0^t  u_n(s)  e^{i(t-s) H}\big( K (\psi-\psi_n)\big)ds
\end{equation*}
and 
\begin{equation*}
\mathcal{N}(\psi,\psi_n)= -i \int_0^t   e^{i(t-s) H}\big(  (\psi-\psi_n)(\psi+\psi_n)  \ov{\psi} \big)ds -i \int_0^t   e^{i(t-s) H}\big(  (\ov{\psi}-\ov{\psi_n})\psi^2_n  \big)ds. 
\end{equation*}
Since $K \in \W^{1,\infty}(\R^3)$, the map $\psi \longmapsto K\psi$ is continuous from $\H^1(\R^3)$ to $\H^1(\R^3)$ and \cite[Lemma~3.7]{BMS} applies. Thus,   when $n \longrightarrow +\infty$
\begin{equation*}
\epsilon_n := \sup_{t \in [0,T]}  \big\|  \int_0^t \big(u(s)-u_n(s) \big) e^{i(t-s) H}( K \psi)ds \big\|_{\H^1(\R^3)} \longrightarrow 0.
\end{equation*}
By Lemma~\ref{lem-prod},
\begin{eqnarray*}
\| \mathcal{N}(\psi,\psi_n)(t) \|_{\H^1(\R^3)} &\leq & \int_0^t   \|   (\psi-\psi_n)(\psi+\psi_n)  \ov{\psi}  \|_{\H^1(\R^3)} ds +\int_0^t   \|  (\ov{\psi}-\ov{\psi_n})\psi^2_n  \|_{\H^1(\R^3)} ds \\
&\leq & \int_0^t   \|  z_n  \|_{\H^1(\R^3)} \big(  \|  \psi  \|^2_{\W^{1,6}} + \|  \psi_n  \|^2_{\W^{1,6}}  \big)ds.
 \end{eqnarray*}
Therefore
\begin{equation*}
  \| z_n (t)\|_{\H^1(\R^3)}  \leq  \epsilon_n+ C\int_0^t  |u_n(s)  |   \|  z_n (s) \|_{\H^1(\R^3)}    ds +  C  \int_0^t   \|  z_n(s)  \|_{\H^1(\R^3)} \big(  \|  \psi  \|^2_{\W^{1,6}} + \|  \psi_n  \|^2_{\W^{1,6}}  \big)(s)ds,
 \end{equation*}
and by the Gr\"onwall lemma
\begin{equation*}
  \| z_n (t)\|_{\H^1(\R^3)}  \leq  \epsilon_n \exp \big( C\int_0^t  |u_n(s)  |      ds +  C    \|  \psi  \|^2_{L^2_t\W^{1,6}} + C    \|  \psi_n \|^2_{L^2_t\W^{1,6}}  \big).
 \end{equation*}
Finally, by \eqref{bound14},
\begin{equation*}
  \| z_n \|_{L^{\infty}_T\H^1}  \leq  \epsilon_n C\big(  T,  \|\psi_0\|_{\H^1} ,   L\big),
 \end{equation*}
which implies the result. \medskip
 
  The end of the proof of Theorem~\ref{BMS_NLS} relies on the same arguments as in Theorem~\ref{thm1.1}.


\appendix 
\section{Some Sobolev estimates} \label{Appendix}

In this section we gather some useful estimates in Sobolev spaces. To begin with, we have the following  generalised Leibniz rule  

\begin{lem} 
  Let $d\geq 1$ and $s\geq 0$, then the following estimates hold
\begin{equation}\label{leibniz} 
\|f\,g\|_{\W^{s,q}(\R^d)}\leq C \|f\|_{L^{q_{1}}(\R^d)}\|g\|_{\W^{s,{q'_{1}}}(\R^d)}+C \|g\|_{L^{q_{2}}(\R^d)}\|f\|_{\W^{s,{q'_{2}}}(\R^d)},
\end{equation}
~\\[-5pt]
with $1<q<\infty$, $1< q_{1},\,q_{2}\leq   \infty$ and  $1\leq  {q'_{1}},\,{q'_{2}}<  \infty$  so that 
$$\frac1q=\frac1{q_{1}}+\frac1{{q'_{1}}}=\frac1{q_{2}}+\frac1{{q'_{2}}}.$$
\end{lem}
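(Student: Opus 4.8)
The plan is to reduce the left-hand $\W^{s,q}$ norm to two more elementary pieces by means of the norm equivalence \eqref{equiv}, and then estimate each piece separately. Applying \eqref{equiv} to the product $fg$ gives
\begin{equation*}
\|fg\|_{\W^{s,q}(\R^d)} \equiv \|(-\Delta)^{s/2}(fg)\|_{L^q(\R^d)} + \|\<x\>^s fg\|_{L^q(\R^d)},
\end{equation*}
so it is enough to bound each of the two terms on the right by the claimed product norms and then recombine.

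For the fractional-derivative term I would invoke the classical fractional Leibniz rule (of Kato--Ponce / Gulisashvili--Kon / Grafakos--Oh type) for the flat fractional Laplacian: for $1<q<\infty$, $s\geq 0$ and exponents in the stated ranges,
\begin{equation*}
\|(-\Delta)^{s/2}(fg)\|_{L^q} \leq C\|f\|_{L^{q_1}}\|(-\Delta)^{s/2}g\|_{L^{q_1'}} + C\|g\|_{L^{q_2}}\|(-\Delta)^{s/2}f\|_{L^{q_2'}}.
\end{equation*}
Each surviving derivative factor is then dominated by the corresponding harmonic norm, again through \eqref{equiv}, e.g. $\|(-\Delta)^{s/2}g\|_{L^{q_1'}}\leq C\|g\|_{\W^{s,q_1'}}$, which reproduces exactly the two terms appearing in \eqref{leibniz}. (When $s$ is an even integer, as in the main applications of the paper, this step is superfluous: the ordinary product rule together with H\"older's inequality suffices, and one avoids fractional calculus altogether.)

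For the weighted term no commutator estimate is required, since $\<x\>^s$ is merely a pointwise multiplier: H\"older's inequality applied with $1/q=1/q_1+1/q_1'$ gives directly
\begin{equation*}
\|\<x\>^s fg\|_{L^q} \leq \|f\|_{L^{q_1}}\,\|\<x\>^s g\|_{L^{q_1'}} \leq C\|f\|_{L^{q_1}}\,\|g\|_{\W^{s,q_1'}},
\end{equation*}
where the last inequality is once more the lower bound $\|\<x\>^s g\|_{L^{q_1'}}\leq C\|g\|_{\W^{s,q_1'}}$ contained in \eqref{equiv}; distributing the weight onto $f$ instead yields the symmetric term. Summing the two contributions and absorbing constants then produces \eqref{leibniz}.

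The main obstacle I anticipate is one of hypotheses rather than of substance: both the norm equivalence \eqref{equiv} and the Kato--Ponce inequality are typically stated for Lebesgue exponents \emph{strictly} between $1$ and $\infty$, whereas the lemma permits $q_1'$ or $q_2'$ to equal $1$ (and $q_1,q_2$ to equal $\infty$). The delicate point is therefore to justify these endpoint cases, either by appealing to the known endpoint versions of the fractional Leibniz rule or by a limiting/interpolation argument, while checking that the constant $C$ remains uniform. Away from the endpoints the argument above is entirely routine.
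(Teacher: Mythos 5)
Your argument is correct and is essentially the paper's own proof: the authors simply cite the fractional Leibniz rule for the usual Sobolev spaces (Taylor, Proposition 1.1, p.~105) and observe that the harmonic-Sobolev version follows from the norm equivalence \eqref{equiv}, which is exactly the splitting into the $(-\Delta)^{s/2}$ term (handled by Kato--Ponce) and the $\<x\>^{s}$-weighted term (handled by H\"older) that you carry out. The endpoint worry you raise at $q_{1}'=1$ or $q_{2}'=1$ is in fact vacuous, since $1/q=1/q_{1}+1/q_{1}'$ together with $q>1$ forces $q_{1}',q_{2}'>1$, so \eqref{equiv} applies to every exponent you need; the only genuine endpoint is $q_{1}=\infty$ or $q_{2}=\infty$, which is covered by the standard $L^{\infty}$-factor form of the fractional Leibniz rule you already cite.
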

For the proof with the usual Sobolev spaces, we refer to    \cite[Proposition 1.1, p. 105]{Taylor}. The result in our context follows by using \eqref{equiv}. Observe that in this result we must have ${q'_{1}},\,{q'_{2}}<  \infty$ and $q \neq 1, \infty$ which induces some technicalities in this paper. \medskip

 A  particular case of the previous inequality is the Moser estimate:   for $d\geq 1$ and $k \in \N$
 \begin{equation}\label{moser} 
 \|fg \|_{\H^k(\R^d)} \leq C\big(\|f \|_{L^{\infty}(\R^d)} \|g \|_{\H^k(\R^d)}+ \|g \|_{L^{\infty}(\R^d)} \|f \|_{\H^k(\R^d)}\big).
 \end{equation} ~

The following lemma will be useful 
\begin{lem}\label{lem-prod-2}
Let $s\geq 0$. There exists $c>0$ such that  for all $\phi\in \H^s(\R)$, $\chi_1 \in \H^s(\R)  \cap \W^{s,\infty}(\R)$ and $\chi_2  \in \H^s(\R)  \cap \W^{s,\infty}(\R)$
\begin{equation*}
\|  \phi \chi_1 \chi_2 \|_{\H^s(\R)} \leq c  \|  \phi  \|_{\H^s(\R)}   \|  \chi_1  \|_{\H^s(\R)  \cap \W^{s,\infty}(\R)}   \|  \chi_2  \|_{\H^s(\R)  \cap \W^{s,\infty}(\R)} .
 \end{equation*}
\end{lem}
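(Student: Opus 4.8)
The plan is to prove the estimate by applying the generalised Leibniz rule \eqref{leibniz} twice, always keeping the factor $\phi$ — for which only the $\H^s$ norm is available — in a slot controllable by $\|\phi\|_{\H^s}$, while letting the two factors $\chi_1,\chi_2$ (which carry the extra $\W^{s,\infty}$ norm) absorb the derivatives and the high integrability. Since $\H^s(\R)=\W^{s,2}(\R)$, the target space has exponent $q=2$, which is admissible in \eqref{leibniz}. First I would dispose of the case $s=0$, which is immediate from $\|\phi\chi_1\chi_2\|_{L^2}\leq\|\phi\|_{L^2}\|\chi_1\|_{L^\infty}\|\chi_2\|_{L^\infty}$, so in the sequel assume $s>0$. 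Writing $\Xi=\chi_1\chi_2$ and applying \eqref{leibniz} to $\phi\,\Xi$ with $q=2$, the choice $q_1=\infty$, $q_1'=2$ in the first term together with a pair $(q_2,q_2')$ satisfying $2<q_2<\infty$, $2<q_2'<\infty$, $\tfrac12=\tfrac1{q_2}+\tfrac1{q_2'}$ in the second yields
\begin{equation*}
\|\phi\,\Xi\|_{\H^s}\leq C\|\Xi\|_{L^\infty}\|\phi\|_{\H^s}+C\|\phi\|_{L^{q_2}}\|\Xi\|_{\W^{s,q_2'}}.
\end{equation*}
The first term is already of the desired form, since $\|\Xi\|_{L^\infty}\leq\|\chi_1\|_{L^\infty}\|\chi_2\|_{L^\infty}$.

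For the second term I would first control $\phi$: the exponent $q_2$ is chosen so that the Sobolev embedding $\H^s(\R)\hookrightarrow L^{q_2}(\R)$ holds, giving $\|\phi\|_{L^{q_2}}\leq C\|\phi\|_{\H^s}$. Here I use \eqref{equiv} at $p=2$ to see $\H^s(\R)\hookrightarrow H^s(\R)$ and then the usual one-dimensional Sobolev embedding. When $s\geq 1/2$ any $q_2\in(2,\infty)$ works, while for $0<s<1/2$ one must take $q_2$ slightly above $2$, so that $q_2\leq 2/(1-2s)$ and $q_2'$ is large but finite.

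It then remains to estimate $\|\Xi\|_{\W^{s,q_2'}}=\|\chi_1\chi_2\|_{\W^{s,q_2'}}$. Applying \eqref{leibniz} a second time, now with target exponent $q=q_2'$ and $q_1=\infty$, $q_1'=q_2'$, gives
\begin{equation*}
\|\chi_1\chi_2\|_{\W^{s,q_2'}}\leq C\|\chi_1\|_{L^\infty}\|\chi_2\|_{\W^{s,q_2'}}+C\|\chi_2\|_{L^\infty}\|\chi_1\|_{\W^{s,q_2'}}.
\end{equation*}
Finally, for $2<q_2'<\infty$ I would bound $\|\chi_i\|_{\W^{s,q_2'}}=\|H^{s/2}\chi_i\|_{L^{q_2'}}$ directly by the log-convexity of Lebesgue norms applied to $H^{s/2}\chi_i$, namely $\|H^{s/2}\chi_i\|_{L^{q_2'}}\leq\|H^{s/2}\chi_i\|_{L^2}^{\theta}\|H^{s/2}\chi_i\|_{L^\infty}^{1-\theta}=\|\chi_i\|_{\H^s}^{\theta}\|\chi_i\|_{\W^{s,\infty}}^{1-\theta}$ with $\theta=2/q_2'$, which is $\leq C\|\chi_i\|_{\H^s\cap\W^{s,\infty}}$. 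Note this last step uses the very definitions $\|\chi_i\|_{\H^s}=\|H^{s/2}\chi_i\|_{L^2}$ and $\|\chi_i\|_{\W^{s,\infty}}=\|H^{s/2}\chi_i\|_{L^\infty}$, so it avoids invoking \eqref{equiv} at the excluded endpoint $p=\infty$. Collecting the three displays gives the claim.

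The main obstacle — really the only one — is the exponent bookkeeping imposed by the constraints of \eqref{leibniz}: the differentiated factor cannot sit in $L^\infty$ (the condition $q_1',q_2'<\infty$) and the target index $q$ must avoid $1$ and $\infty$. This is precisely what forces the intermediate finite exponent $q_2'$ and the interpolation step above, and it also makes the admissible range of $q_2$ delicate in the low-regularity regime $0<s<1/2$, where $\H^s(\R)\hookrightarrow L^{q_2}(\R)$ only holds up to $q_2=2/(1-2s)$; choosing $q_2$ just above $2$ keeps all the intermediate exponents in the legal range.
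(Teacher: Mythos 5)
Your proof is correct and follows essentially the same route as the paper's: one application of the generalised Leibniz rule to peel off $\phi$ (keeping it in the $L^{q_1'}=L^2$ slot or controlling $\|\phi\|_{L^{q_2}}$ by Sobolev with $q_2$ close to $2$), a second application to $\chi_1\chi_2$, and interpolation of Lebesgue norms to dominate $\|\chi_i\|_{\W^{s,q_2'}}$ by $\|\chi_i\|_{\H^s\cap\W^{s,\infty}}$. The only differences are cosmetic (you fix $q_1=\infty$ in the second Leibniz step and spell out the log-convexity argument that the paper merely asserts).
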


\begin{proof}
The case $s=0$ is directly obtained by writing $\|  \phi \chi_1 \chi_2 \|_{L^2(\R)} \leq c  \|  \phi  \|_{L^2(\R)}   \|  \chi_1  \|_{L^\infty(\R) }    \|  \chi_2  \|_{L^\infty(\R) }$. 

Now we assume that $s>0$. By \eqref{leibniz} we have
\begin{equation*}
\|  \phi \chi_1 \chi_2 \|_{\H^s(\R)} \leq c   \|   \phi \|_{\H^s(\R)} \|  \chi_1  \chi_2 \|_{L^\infty(\R) }     +c\|  \phi   \|_{L^p(\R)} \|  \chi_1  \chi_2 \|_{\W^{s,q}(\R) }   
 \end{equation*}
 for all $2<p,q<\infty$ such that $1/p+1/q=1/2$. Then, by the Sobolev inequalities, if $p>2$ is small enough, $ \| \phi  \|_{L^p} \leq c  \| \phi  \|_{\H^s}$. Next, by \eqref{leibniz} again, 
 \begin{equation*}
\|   \chi_1 \chi_2 \|_{\W^{s, q}(\R)} \leq c \|\chi_1\|_{L^{q_{1}}(\R)}\|\chi_2\|_{\W^{s,{q'_{1}}}(\R)}+c \|\chi_2\|_{L^{q_{1}}(\R)}\|\chi_1\|_{\W^{s,{q'_{1}}}(\R)}  ,
 \end{equation*} 
 with  $1/q_1+1/{q'_1}=1/q$. We are able to conclude by observing that 
 $$    \|\chi\|_{L^{q_{1}}(\R)},   \|\chi\|_{\W^{s,{q'_{1}}}(\R)}  \leq \|\chi\|_{\H^{s}(\R)} +\|\chi\|_{\W^{s,{\infty}}(\R)} = \|  \chi  \|_{\H^s(\R)  \cap \W^{s,\infty}(\R)}.  $$
 \end{proof}

In the same spirit we state the following result

\begin{lem}\label{lem-prod}
There exists $c>0$ such that  for all $\phi\in \H^1(\R^3)$, $\chi_1 \in \W^{1,6}(\R^3)$ and $\chi_2 \in \W^{1,6}(\R^3)$
\begin{equation*}
\|  \phi \chi_1 \chi_2 \|_{\H^1(\R^3)} \leq c  \|  \phi  \|_{\H^1(\R^3)}   \|  \chi_1  \|_{\W^{1,6}(\R^3)}  \|  \chi_2  \|_{\W^{1,6}(\R^3)} .
 \end{equation*}
\end{lem}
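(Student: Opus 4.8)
The plan is to reduce the trilinear estimate to two applications of the generalised Leibniz rule \eqref{leibniz}, using the two Sobolev embeddings available in dimension three: $\H^1(\R^3)\hookrightarrow L^6(\R^3)$ and $\W^{1,6}(\R^3)\hookrightarrow L^\infty(\R^3)$, the latter being valid precisely because $6>d=3$. The idea is first to split off the rough factor $\phi$, and then to handle the product $\chi_1\chi_2$ of the two $\W^{1,6}$ factors separately. Since \eqref{leibniz} is already stated for the harmonic Sobolev spaces $\W^{s,q}$, I can work directly with the $\H^1=\W^{1,2}$ norm without decomposing the weight via \eqref{equiv}.

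First I would apply \eqref{leibniz} with $s=1$, $q=2$, $f=\phi$, $g=\chi_1\chi_2$, choosing the exponents $(q_1,q_1')=(6,3)$ in the first term and $(q_2,q_2')=(\infty,2)$ in the second; these are admissible since $1/2=1/6+1/3=1/\infty+1/2$, with $q_1',q_2'<\infty$ and $q=2\neq 1,\infty$. This gives
\[
\|\phi\chi_1\chi_2\|_{\H^1(\R^3)}\leq C\|\phi\|_{L^6(\R^3)}\,\|\chi_1\chi_2\|_{\W^{1,3}(\R^3)}+C\|\chi_1\chi_2\|_{L^\infty(\R^3)}\,\|\phi\|_{\H^1(\R^3)}.
\]
By Sobolev, $\|\phi\|_{L^6}\leq C\|\phi\|_{\H^1}$, while $\|\chi_1\chi_2\|_{L^\infty}\leq\|\chi_1\|_{L^\infty}\|\chi_2\|_{L^\infty}\leq C\|\chi_1\|_{\W^{1,6}}\|\chi_2\|_{\W^{1,6}}$ using $\W^{1,6}(\R^3)\hookrightarrow L^\infty$. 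For the remaining factor $\|\chi_1\chi_2\|_{\W^{1,3}}$ I would apply \eqref{leibniz} a second time with $q=3$ and the symmetric choice $(q_1,q_1')=(q_2,q_2')=(6,6)$ (here $1/3=1/6+1/6$), obtaining
\[
\|\chi_1\chi_2\|_{\W^{1,3}(\R^3)}\leq C\|\chi_1\|_{L^6}\|\chi_2\|_{\W^{1,6}}+C\|\chi_2\|_{L^6}\|\chi_1\|_{\W^{1,6}}\leq C\|\chi_1\|_{\W^{1,6}}\|\chi_2\|_{\W^{1,6}},
\]
since $\|\chi_i\|_{L^6}\leq\|\chi_i\|_{\W^{1,6}}$ trivially. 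Combining the three bounds yields the claim.

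I do not expect a genuine obstacle: the statement is essentially a bookkeeping exercise, and the only points requiring care are that the exponents be chosen so that (a) exactly the dimension-three embeddings $\H^1\hookrightarrow L^6$ and $\W^{1,6}\hookrightarrow L^\infty$ are invoked, and (b) the admissibility constraints of \eqref{leibniz} — namely $q\neq 1,\infty$ together with $q_1',q_2'<\infty$ — hold at each application. In particular the choice $q_2=\infty$ in the first step is legitimate because \eqref{leibniz} permits $q_2=\infty$ as long as the conjugate $q_2'=2$ stays finite. The argument parallels the proof of Lemma~\ref{lem-prod-2}, with the embedding $\W^{1,6}(\R^3)\hookrightarrow L^\infty(\R^3)$ playing the role that the spaces $\W^{s,\infty}$ play there.
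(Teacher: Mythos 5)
Your argument is correct: both applications of \eqref{leibniz} respect the admissibility constraints ($q\neq 1,\infty$ and $q_1',q_2'<\infty$, with $q_2=\infty$ allowed in the first step), the two embeddings $\H^1(\R^3)\hookrightarrow L^6$ and $\W^{1,6}(\R^3)\hookrightarrow L^\infty$ are exactly the right ones for $d=3$, and the three bounds combine to give the stated trilinear estimate. The paper proceeds differently in execution, though in the same spirit: it exploits the fact that $s=1$ is an integer, writes the $\H^1$ norm via the equivalence \eqref{equiv} as $\|\nabla(\cdot)\|_{L^2}+\|\<x\>(\cdot)\|_{L^2}$, expands $\nabla(\phi\chi_1\chi_2)$ by the ordinary product rule, and estimates the four resulting terms directly by H\"older (with exponents $\infty,\infty,2$ and $6,6,6$) before invoking the same two Sobolev embeddings. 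The terms produced are essentially in bijection with yours --- e.g.\ the paper's $\|\phi\|_{L^6}\|\chi_1\|_{L^6}\|\nabla\chi_2\|_{L^6}$ is what your term $\|\phi\|_{L^6}\|\chi_1\chi_2\|_{\W^{1,3}}$ becomes after your second Leibniz application. What the paper's route buys is elementarity: no fractional Leibniz rule and no exponent bookkeeping are needed once $s=1$. What your route buys is uniformity: it is structurally identical to the paper's own proof of Lemma~\ref{lem-prod-2} and would carry over verbatim to non-integer regularity, where the explicit gradient expansion is unavailable. One small point to make explicit: the inequality $\|\chi_i\|_{L^6}\leq C\|\chi_i\|_{\W^{1,6}}$ is not literally trivial for the norm $\|H^{1/2}\chi_i\|_{L^6}$ --- it follows from the equivalence \eqref{equiv}, which gives the lower bound $\|\<x\>\chi_i\|_{L^6}\geq\|\chi_i\|_{L^6}$; this costs only the equivalence constant and does not affect the proof.
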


\begin{proof}
From the Leibniz rule and the H\"older inequality we deduce that 
\begin{multline*}
\|  \phi \chi_1 \chi_2 \|_{\H^1(\R^3)} \leq \|  \chi_1 \chi_2  \nabla \phi \|_{L^2(\R^3)} +\|  \phi \chi_1  \nabla \chi_2    \|_{L^2(\R^3)} +\|  \phi \chi_2  \nabla \chi_1    \|_{L^2(\R^3)} +\|  \<x\> \phi  \chi_1 \chi_2  \|_{L^2(\R^3)} \\
 \leq \|  \chi_1 \|_{L^{\infty}} \|  \chi_2 \|_{L^{\infty}}  \big(\|  \nabla \phi \|_{L^2}+\|  \<x\> \phi \|_{L^2} \big) +\|  \phi     \|_{L^6} \big( \|  \chi_1     \|_{L^6}\|   \nabla \chi_2    \|_{L^6} +  \|  \chi_2     \|_{L^6}\|   \nabla \chi_1    \|_{L^6}  \big).
 \end{multline*}
 Then by the Sobolev inequalities, $ \|  \chi \|_{L^{\infty}(\R^3)} \leq C  \|  \chi  \|_{\W^{1,6}(\R^3)}$ and $\|  \phi     \|_{L^6(\R^3)} \leq C \|  \phi     \|_{\H^1(\R^3)} $, which allows to conclude.
\end{proof}\medskip

 We recall the following interpolation lemma taken from  \cite[Lemma 3.3]{BTT1}.
\begin{lem}\label{lem-interp} 
Let $T>0$ and $p\in[1,+\infty]$. Let $-\infty<\s_{2}\leq \s_{1} <+\infty$ and assume that $\psi\in  L^{p}\big([-T,T]; \H^{\s_{1}}\big)$ and $\partial_{t}\psi\in  L^{p}\big([-T,T]; \H^{\s_{2}}\big)$. Then for  all $\eps>\s_{1}/p-\s_{2}/p$, $\psi\in  L^{\infty}\big([-T,T]; \H^{\s_{1}-\eps}\big)$ and 
\begin{equation*}
\|\psi\|_{L^{\infty}_{T}\H^{\s_{1}-\eps}}\leq C\|\psi\|^{1-1/p}_{L^{p}_{T}\H^{\s_{1}}} \|\psi\|^{1/p}_{W_{T}^{1,p}\H^{\s_{2}}}.
\end{equation*}
Moreover, there exists $\eta>0$ and $\theta\in[0,1]$ so that for all $t_{1},t_{2}\in [-T,T]$
\begin{equation*} 
\|\psi(t_{1})-\psi(t_{2})\|_{\H^{\s_{1}-2\eps}}\leq C|t_{1}-t_{2}|^{\eta}\|\psi\|^{1-\theta}_{L^{p}_{T}\H^{\s_{1}}} \|  \psi\|^{\theta}_{W_{T}^{1,p}\H^{\s_{2}}}.
\end{equation*}
\end{lem}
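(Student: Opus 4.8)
The plan is to prove Lemma~\ref{lem-interp} by a time-mollification argument coupled with interpolation between the two spaces $\H^{\s_1}$ and $\H^{\s_2}$. Since the target space $\H^{\s_1-\eps}$ only grows as $\eps$ increases, and $\H^{\s_1-\eps'}\hookrightarrow\H^{\s_1-\eps}$ for $\eps'\le\eps$, I would first reduce to $\eps$ arbitrarily close to $(\s_1-\s_2)/p$, so that in particular $\s_2\le \s_1-2\eps\le\s_1-\eps\le\s_1$. Next I would record the low-regularity modulus of continuity: writing $\psi(t)-\psi(t')=\int_{t'}^t\partial_\tau\psi\,d\tau$ and applying H\"older in time gives
\begin{equation*}
\|\psi(t)-\psi(t')\|_{\H^{\s_2}}\le |t-t'|^{1-1/p}\,\|\partial_t\psi\|_{L^p_T\H^{\s_2}},
\end{equation*}
which is the only place the hypothesis $\partial_t\psi\in L^p_T\H^{\s_2}$ enters the continuity statement.

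For the $L^\infty_T$ bound, for each scale $h>0$ I would split $\psi(t)$ by a time average,
\begin{equation*}
\psi(t)=\frac1h\int_t^{t+h}\psi(s)\,ds+\frac1h\int_t^{t+h}\Big(\int_s^t\partial_\tau\psi(\tau)\,d\tau\Big)\,ds=:A_h+B_h,
\end{equation*}
and estimate the two pieces in the two different scales: by H\"older in time $\|A_h\|_{\H^{\s_1}}\le h^{-1/p}\|\psi\|_{L^p_T\H^{\s_1}}$ while $\|B_h\|_{\H^{\s_2}}\le h^{1-1/p}\|\partial_t\psi\|_{L^p_T\H^{\s_2}}$. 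This is exactly a bound on the $K$-functional $\mathcal K(\tau,\cdot)$ of the couple $(\H^{\s_1},\H^{\s_2})$, and optimizing the scale $h$ yields, uniformly in $t\in[-T,T]$,
\begin{equation*}
\mathcal K(\tau,\psi(t))\le C\,\tau^{1/p}\,\|\psi\|_{L^p_T\H^{\s_1}}^{1-1/p}\,\|\partial_t\psi\|_{L^p_T\H^{\s_2}}^{1/p}.
\end{equation*}
In other words $\psi(t)$ lies in the real interpolation space $(\H^{\s_1},\H^{\s_2})_{1/p,\infty}$ with norm controlled by the right-hand side of the lemma.

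The decisive step is then the embedding $(\H^{\s_1},\H^{\s_2})_{1/p,\infty}\hookrightarrow\H^{\s_1-\eps}$, valid precisely when $\eps>(\s_1-\s_2)/p$: by the nesting of real interpolation spaces one trades the weak $\infty$-endpoint at index $1/p$ for a genuine $L^2$-based space $(\H^{\s_1},\H^{\s_2})_{\theta,2}=\H^{\s_1-\theta(\s_1-\s_2)}$ at any index $\theta>1/p$, and choosing $\theta(\s_1-\s_2)=\eps$ delivers the first estimate together with membership in $L^\infty_T\H^{\s_1-\eps}$. This endpoint loss, passing from the critical regularity $\s_1-(\s_1-\s_2)/p$ to the strictly subcritical $\s_1-\eps$, is the main obstacle, and is exactly what forces the strict inequality in the hypothesis. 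Finally, for the time-H\"older bound I would interpolate the uniform bound just obtained in $\H^{\s_1-\eps}$ against the $\H^{\s_2}$ modulus of continuity: writing $\s_1-2\eps=(1-\mu)(\s_1-\eps)+\mu\s_2$ with $\mu=\eps/(\s_1-\eps-\s_2)\in(0,1)$ and applying $\|f\|_{\H^{\s_1-2\eps}}\le C\|f\|_{\H^{\s_1-\eps}}^{1-\mu}\|f\|_{\H^{\s_2}}^{\mu}$ to $f=\psi(t_1)-\psi(t_2)$ produces the factor $|t_1-t_2|^{\mu(1-1/p)}$, so that $\eta=\mu(1-1/p)>0$ and $\theta\in[0,1]$ emerge from collecting the interpolation weights. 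The whole argument could alternatively be run on the Hermite expansion of $H$, but the abstract interpolation formulation keeps the index bookkeeping shortest; in any case this is the statement of \cite[Lemma 3.3]{BTT1}, to which I would ultimately refer.
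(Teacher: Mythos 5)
The paper does not actually prove this lemma: it is imported verbatim from \cite[Lemma 3.3]{BTT1}, so there is no in-paper argument to compare yours against. Taken on its own terms, your proof is correct and is the standard real-interpolation (Lions--Peetre type) route: the averaging decomposition $\psi(t)=A_h+B_h$ yields $K(\tau,\psi(t))\lesssim h^{-1/p}X+\tau h^{1-1/p}Y$ for the $K$-functional of the ordered couple $(\H^{\s_1},\H^{\s_2})$, with $X=\|\psi\|_{L^p_T\H^{\s_1}}$ and $Y=\|\partial_t\psi\|_{L^p_T\H^{\s_2}}$; optimizing in $h$ places $\psi(t)$ in $(\H^{\s_1},\H^{\s_2})_{1/p,\infty}$ with norm $\lesssim X^{1-1/p}Y^{1/p}$; and the strict inequality $\eps>(\s_1-\s_2)/p$ is exactly the price of the nesting $(\H^{\s_1},\H^{\s_2})_{1/p,\infty}\hookrightarrow(\H^{\s_1},\H^{\s_2})_{\theta,2}=\H^{\s_1-\eps}$ with $\theta=\eps/(\s_1-\s_2)>1/p$ (which, for these $H$-based Sobolev scales, can be read off the Hermite expansion). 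Three points deserve a word in a written-up version. First, the optimizing $h$ must be capped by the interval length $2T$; this is harmless since the constant may depend on $T$ (taking $h=2T$ gives $\|\psi(t)\|_{\H^{\s_2}}\le C_T(X+Y)$, which handles the small-$\tau$ regime). Second, your parameter $\mu=\eps/(\s_1-\eps-\s_2)$ lies in $(0,1)$ only when $2\eps<\s_1-\s_2$; when $\s_1-2\eps\le\s_2$ one instead uses $\H^{\s_2}\hookrightarrow\H^{\s_1-2\eps}$ together with the $\H^{\s_2}$ modulus of continuity, taking $\theta=1$ and $\eta=1-1/p$ --- note that this is precisely the regime in which the paper applies the lemma ($p=2$, $\s_1=k+\beta$, $\s_2=k-2$, so $\eps>(\beta+2)/2$ forces $\s_1-2\eps<\s_2$), so the case split is not cosmetic. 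Third, for $p=1$ the exponent $1-1/p$ vanishes and no H\"older modulus can be extracted (a $W^{1,1}$-in-time function need not be H\"older continuous), so the second assertion requires $p>1$; this is a defect of the statement as quoted rather than of your argument, and is immaterial for the paper, which only uses $p=2$.
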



\end{document}